\def\NZQ{\mathbb}               
\def\FF{{\NZQ F}}
\def\frk{\mathfrak}               
\def\Phi{{\frk n}}
\def\Phi{{\frk N}}
\def\kb{{\mathbf k}}
\def\xb{{\mathbf x}}
\def\yb{{\mathbf y}}
\def\zb{{\mathbf z}}
\def\pb{{\mathbf p}}
\def\A{{\mathcal A}}
\def\Fc{{\mathcal F}}
\def\Sc{{\mathcal S}}
\def\Fc{{\mathcal F}}
\def\Sc{{\mathcal S}}
\def\xb{{\mathbf x}}
\def\yb{{\mathbf y}}
\def\zb{{\mathbf z}}
\def\opn#1#2{\def#1{\operatorname{#2}}} 
\opn\chara{char} \opn\length{\ell} \opn\pd{pd} \opn\rk{rk}
\opn\projdim{proj\,dim} \opn\injdim{inj\,dim} \opn\rank{rank}
\opn\depth{depth} \opn\grade{grade} \opn\height{height}
\opn\embdim{emb\,dim} \opn\codim{codim}
\opn\Tr{Tr} \opn\bigrank{big\,rank}
\opn\superheight{superheight}\opn\lcm{lcm}
\opn\trdeg{tr\,deg}
\opn\reg{reg} \opn\lreg{lreg} \opn\ini{in} \opn\lpd{lpd}
\opn\size{size} \opn\sdepth{sdepth}
\opn\link{link}\opn\fdepth{fdepth}\opn\lex{lex}
\opn\LM{LM}
\opn\LC{LC}
\opn\NF{NF}
\opn\Merge{Merge}
\opn\sgn{sgn}
\opn\suppPos{suppPos}
\opn\div{div} \opn\Div{Div} \opn\cl{cl} \opn\Pic{Pic}
\opn\Prin{Prin}
\opn\op{op}
\opn\indeg{indeg} \opn\outdeg{outdeg}
\opn\red{red}
\opn\Spec{Spec} \opn\Supp{Supp} \opn\supp{supp} \opn\Sing{Sing}
\opn\Ass{Ass} \opn\Min{Min}\opn\Mon{Mon}
\opn\Ann{Ann} \opn\Rad{Rad} \opn\Soc{Soc}
 \opn\Ker{Ker} \opn\Coker{Coker} \opn\Am{Am}
\opn\Hom{Hom} \opn\Tor{Tor} \opn\Ext{Ext} \opn\End{End}
\opn\Aut{Aut} \opn\id{id}
\opn\nat{nat}
\opn\pff{pf}
\opn\Pf{Pf} \opn\GL{GL} \opn\SL{SL} \opn\mod{mod} \opn\ord{ord}
\opn\Gin{Gin} \opn\Hilb{Hilb}\opn\sort{sort}
\opn\Image{Image}
\opn\aff{aff} \opn\con{conv} \opn\relint{relint} \opn\st{st}
\opn\lk{lk} \opn\cn{cn} \opn\core{core} \opn\vol{vol}
\opn\link{link} \opn\star{star}\opn\lex{lex}\opn\set{set}
\opn\dist{dist}
\opn\gr{gr}
\def\pot#1#2{#1[\kern-0.28ex[#2]\kern-0.28ex]}
\opn\dirlim{\underrightarrow{\lim}}
\opn\inivlim{\underleftarrow{\lim}}
\def\Implies{\ifmmode\Longrightarrow \else
        \unskip${}\Longrightarrow{}$\ignorespaces\fi}
\def\implies{\ifmmode\Rightarrow \else
        \unskip${}\Rightarrow{}$\ignorespaces\fi}
\def\iff{\ifmmode\Longleftrightarrow \else
        \unskip${}\Longleftrightarrow{}$\ignorespaces\fi}
\newtheorem{Theorem}{Theorem}[section]
\newtheorem{Lemma}[Theorem]{Lemma}
\newtheorem{Proposition}[Theorem]{Proposition}
\theoremstyle{remark}
\newtheorem{Remark}[Theorem]{Remark}
\theoremstyle{definition}
\newtheorem{Example}[Theorem]{Example}
\newtheorem{Definition}[Theorem]{Definition}
\let\kappa=\varkappa
\def\qed{\ifhmode\textqed\fi
      \ifmmode\ifinner\quad\qedsymbol\else\dispqed\fi\fi}
\def\textqed{\unskip\nobreak\penalty50
       \hskip2em\hbox{}\nobreak\hfil\qedsymbol
       \parfillskip=0pt \finalhyphendemerits=0}
\def\dispqed{\rlap{\qquad\qedsymbol}}
\opn\dis{dis}
\def\pnt{{\raise0.5mm\hbox{\large\bf.}}}
\opn\Lex{Lex}
\opn\syz{{\rm syz}}
\opn\spoly{{\rm spoly}}
\opn\LM{{\rm LM}}
\opn\lm{{\rm lm}}
\opn\MVT{{\rm MVT}}
\opn\projdim{{\rm projdim}}
\opn\lcm{{\rm lcm}} \opn\A{\mathcal A}
\opn\prob{{\rm prob}}
\numberwithin{equation}{section}
\tikzstyle{Cwhite}=[scale = .6,circle, fill = white, minimum size=2.5mm]
\tikzstyle{Cgray}=[scale = .4,circle, fill = gray, minimum size=3mm]
\tikzstyle{Cblack2}=[scale = .4,circle, fill = black, minimum size=3mm]
\tikzstyle{Cblack}=[scale = .7,circle, fill = black, minimum size=3mm]
\tikzstyle{C0}=[scale = .9,circle, fill = black!0, inner sep = 0pt, minimum size=3mm]
\tikzstyle{C1}=[scale = .7,circle, fill = black!0, inner sep = 0pt, minimum size=3mm]
\tikzstyle{Cred}=[scale = .4,circle, fill = red, minimum size=3mm]
\tikzstyle{Cblue}=[scale = .4,circle, fill =blue, minimum size=3mm]
\renewcommand\nomgroup[1]{%
  \item[\bfseries
  \ifstrequal{#1}{A}{Algebraic reliability of multi-state systems}{%
  \ifstrequal{#1}{B}{Simple Multi-state $k$-out-of-$n$ systems}{%
  \ifstrequal{#1}{C}{Generalized multi-state $k$-out.of-$n$ systems}{%
  \ifstrequal{#1}{D}{Binary multi-state $k$-out-of-$n$ systems with multi-state components}}}}%
]}
\begin{document}

\title{Algebraic reliability of multi-state $k$-out-of-$n$ systems}

\author{Patricia Pascual-Ortigosa} 
\address{Departamento de Matem\'aticas y Computaci\'on,  Universidad de La Rioja, Spain}
\email{patricia.pascualo@unirioja.es}

\author{Eduardo S\'aenz-de-Cabez\'on}
\address{Departamento de Matem\'aticas y Computaci\'on,  Universidad de La Rioja, Spain}
\email{eduardo.saenz-de-cabezon@unirioja.es}

\author{Henry P. Wynn}
\address{Department of Statistics, London School of Economics, UK}
\email{h.wynn@lse.ac.uk}


\maketitle

\begin{abstract}
	In this paper we review different definitions that multi-state $k$-out-of-$n$ systems have received along the literature and study them in a unified way using the algebra of monomial ideals. We thus obtain formulas and algorithms to compute their reliability and bounds for it. We provide formulas and computer experiments for simple and generalized multi-state $k$-out-of-$n$ systems and for binary $k$-out-of-$n$ systems with multi-state components.\end{abstract}

\section{Introduction}\label{Sec:Intro}
We say that a system is a $k$-out-of-$n$:G system (G for good) if it works whenever $k$ of its $n$ components work, and that it is a $k$-out-of-$n$:F (F for fail) if it fails whenever $k$ of its $n$ components fail. $k$-out-of-$n$ systems are one of the most relevant types of systems studied in reliability theory due to their theoretical interest and wide range of applications, cf. \cite{HZW00b,KZ03, E10}. The multi-state version, which can model more general situations, has been object of intense research in the last decades and is also applied in a variety of situations \cite{HZF03,HZW00,ADMX15,E18,R19}. Since the first definition of multi-state $k$-out-of-$n$ systems \cite{EPS78} several authors have proposed different definitions and generalizations, together with particular methods to evaluate the reliability of these systems, see for instance \cite{BK94,HZF03,HAR07,DZLL10,DZTL10,AEHR11,ADMX15,MXCS17} and references therein.

We list a number of examples of this kind of sytems. 
\begin{enumerate}
\item {\em Power generation}. The safety and reliability of power systems is an essential component of energy security and is increasing its importance in a period in which there are likely to be radical changes in energy supply as governments adopt zero net carbon strategies and use more renewable sources, such as wind power, which may be more volatile. There are four standard states of generation for an energy unit: (i) available and in service, (ii) available and not in  service (iii) planned outage, (iv) unplanned outage. Considering that a national electricity grid will have many sources of supply and different components will be in different states, this represents a challenging multi-component multi-state network. There is also a strong time aspect leading to strict definitions such as FOR: Forced Outage Rate and AV: Availability, which form part of supply contracts and regulation. Recent books are \cite{RT19,SJM19,LPLLCS19} for a $k$-out-of-$n$ approach.

\item {\em DNA repair}. DNA damage is a biological process  that can upset important functions such as replication. DNA damage is different from mutation, although both occur.  The system can be in very many states, depending on the amount of cell loss of different types. Areas of study include the fundamental equilibria between repair and damage, needed to sustain the systems. Initial  models make assumptions, similar to those in reliability,  for example, that occurrence at break sites happen independently \cite{DZNHMKS15}.

\item  {\em Software reliability and Bayes nets}. It is natural in several areas of reliability to take a probabilistic state-space approach. This is particularly true of one of the main traditions of software reliability and provides an alternative to rule based formal methods. An advantage of this approach is that it can model systems as a Bayes net and link up with modern theories of causation. Also important in such systems is the idea of {\em degradation} which automatically implies different levels of reliability and is particularly important in the analysis of safety critical systems; see \cite{FB14} for a comprehensive approach.
\end{enumerate}

In the failure of $k$-out-of-$n$ components, the number $k$ is a simple metric to describe degradation (mentioned above) and this extends to the multi-state methods addressed here.  A useful way to think of the latter is that there is a damage ``frontier" beyond which the system is deamed to have failed or to have reached a level, for example, at which the unit may be switched off for maintenance. This may be planned or unplanned (as mentioned above for power generation).  Another way of conceptualizing these issues is that the metric $k$ is simply a way of {\em counting} some (bad) aspect of the system and counting is surely a basic combinatorial and algebraic activity. Broadly, research on the theory of  $k$-out-of-$n$ methods divides into (i) combinatorial and algebraic theory, as in this paper, and (ii) simulations studies, which are typically of a Markovian type. For the combinatorial methods generating functions play an important role \cite{Y06}. In our work this is reflected in the use of Hilbert series, which are essentially a type of generating function.  For sequential $k$-out-of-$n$  problems one  often converts the system into a Markov chain, inspect the ergodic behaviour and benchmark against probabilistic asymptotics from large deviation theory and boundary crossing methods. A main tool is that of de Bruijn graphs which track the change of a moving window between time steps \cite{LD09} .  Signature analysis has also been applied to $k$-out-of-$n$ systems \cite{M19}. The methods employ the inherent symmetries in the order statistics of failure events to simplify reliability bounds, \cite{KS17}. Genome analysis is one science that makes much use of a type of  $k$-out-of-$n$ analysis under a heading of {\em k-mer}: the detection of special genome sequence of length $k$ out of a much longer sequence, \cite{RLDC13}. There is a dominance of computer base search methods in the area and some also use the de Bruijn graph methods. The idea of a ``special sequence" makes the field quite close to percolation theory where the sequence is a percolation through a lattice structure of some kind.  

The algebraic method for the analysis of system reliability associates a monomial ideal to a coherent system and by studying algebraic properties of this ideal obtains information about the system and its reliability \cite{SW10,SW11,SW12, SW15}, see Appendix \ref{appendix:ideals} for a basic introduction to this method. The principal objective is to obtain general extensions of classical Bonferroni bounds in multi-state system reliability.  It is a general method that can be adapted to different kind of systems, both binary and multi-state. In this paper we review the different definitions of multi-state $k$-out-of-$n$ systems, study them in an algebraic way, and apply the algebraic method as a unified way to compute their reliability. The foundation has two  parts: a description of the system, including the idea of a state, and the stochastic model which defines the occupancy of the state. The next step is to map the system into an algebraic object called a monomial ideal, which can be handled via combinatorial algebra, including the use of computer algebra (already well developed for this purpose). The compact inclusion-exclusion formulae needed for the bounds start by being distribution-free and require special Betti numbers which are attached to the "live" terms in the formulae. For simple probability models it is then straightforward to obtain the actual probability bounds.

A problem for the reliability computation of these systems is the computational burden when complexity increases. Several algorithms have been proposed to compute the exact reliability of these systems, see \cite{AZD09,CBAZ12,ZC10,TZY08,MXAB15}; also, Ding et al. propose in \cite{DZLL10} a framework for reliability approximation. Our approach, while enumerative, shows good performance and can provide both exact reliability and bounds in the case of i.i.d components and in the case of independent non-identical components.

The outline of the paper is the following: in Section \ref{Sec:Prelim} we give a quick overview of the algebraic method for system reliability analysis, in particular when applied to multi-state systems. In Section \ref{Sec:FirstMS_KN} we show the first definitions of multi-state $k$-out-of-$n$ systems, give an algebraic version of them and use it to analyse the reliability of this kind of systems. In Section \ref{Sec:GeneralizedKN} we study generalized multi-state $k$-out-of-$n$ systems and in Section \ref{Sec:B_MS_KN} we focus on a type of binary $k$-out-of-$n$ systems with multi-state components and give an example of application of these systems. A simple storage problem is used for illustration. 

\section*{Nomenclature}

\begin{description}[font=\sffamily, leftmargin=3cm]
\item[$S$] {A coherent system}
\item[{$n$}] {Number of components of the system $S$}

\item[{$m$} ] {Maximum level of performance of the system $S$}

\item[{$\mathcal{S}=\lbrace 0,\dots,m\rbrace$}] {Possible states of the system $S$}

\item[$c_i$]  {Component $i$ of the system, $i\in\lbrace1,\dots,n\rbrace$}

\item[$m_i$]  {Maximum level of performance of the component $c_i$, $i\in \lbrace 1,\dots,n\rbrace$}

\item[$\mathcal{S}_i=\lbrace0,\dots,m_i\rbrace$]  {Possible states of the component $c_i$, $i\in \lbrace1,\dots,n\rbrace$}

\item[{$\phi:\mathcal{S}_1\times\dots\times\mathcal{S}_n\rightarrow\mathcal{S}$}] {Structure function of the system $S$}

\item[{$\xb=(x_1,\dots,x_n)$}]  {Vector of components' states}

\item[{$\mathcal{F}_{S,j}$} ] {Set of $j$-working states of $S$}

\item[{$\overline{\mathcal{F}}_{S,j}$} ] {Set of minimal $j$-working states of $S$}

\item[{$I_{S,j}$} ] {$j$-reliability ideal of $S$}

\item[{$G(I_{S,j})$}]  {Unique minimal monomial generating set of $I_{S,j}$}

\item[{$H_{I_{S,j}}$}]  {Numerator of the Hilbert series of $I_{S,j}$}

\item[{$\beta_i(I), \beta_{i,j}(I)$ } ] {Betti numbers and graded Betti numbers of $I$}

\item[{$I_{(k,n),j}$}]  {$j$-reliability ideal of a simple multi-state $k$-out-of-$n$ system}

\item[{$N_j$}] {Number of components in state $j$ or above, $j\in\lbrace 1,\dots,M\rbrace$}

\item[{$S_{n,(k_1,\dots,k_M)}$}] {Generalized multi-state $k$-out-of-$n$ system}

\item[{$I_{n,(k_1,\dots,k_M)}$}] {$j$-reliability ideal of a generalized multi-state $k$-out-of-$n$ system}

\item[{$p_{i,j}$}] {Probability that the component $i$ is in level greater than or equal to $j$}

\item[{$R_{S,j}$}] {Probability that the system $S$ is performing at level greater than or equal to $j$}

\item[{$r_{S,j}$}] {Probability that the system $S$ is performing at level $j$}

\item[{$S_{m,n,k}$}] {$m$-multi-state $k$-out-of-$n$:G system}

\item[{$J^m_{[n,k]}$}] {$j$-reliability ideal of the system $S_{m,n,k}$}

\item[{$N^m_{[n,k]}$}] {Number of generators of the ideal $J^m_{[n,k]}$}

\end{description}

\section{Algebraic reliability of multi-state systems}\label{Sec:Prelim}

Let $S$ be a system with $n$ components that can be in any of a set of $m+1$ possible states $\Sc=\{0,\dots,m\}$. Each component $c_i$ of  $S$ can be in a discrete number of ordered states $\Sc_i=\{0,\dots,m_i\}$. The states of the system are also ordered and measure the overall performance of the system. We assume that state $j$ represents better performance than state $i$ whenever $j>i$. We define a structure function $\phi$ that for each $n$-tuple of component states outputs the state of the system i.e. $\phi:\Sc_1\times \cdots \times \Sc_n\rightarrow \Sc$. We say that the system is {\it coherent} if $\phi(\xb)\geq\phi(\yb)$ whenever $\xb>\yb$, which means that the component states given by $\xb$ are greater or equal than those given by $\yb$ and there is at least one improvement. Conversely, $\phi(\xb)\leq\phi(\yb)$ whenever $\xb<\yb$. If $m_1=\dots=m_n=1$, then we say that the system has {\it binary components}. If $m=1$, then we say that the system is itself {\it binary}. We have therefore the following types of systems with respect to their number of states:
\begin{itemize}
\item[-] If $m=1$ and $m_i=1$ for all $i$, we have a binary system with binary components. These are usually simply referred to as {\em binary systems}.
\item[-] If $m>1$ and $m_i=1$ for all $i$, we have a multi-state system with binary components.
\item[-] If $m=1$ and there is at least one component $i$ such that $m_i>1$, we have a binary system with multi-state components.
\item[-] If $m>1$ and there is at least one component $i$ such that $m_i>1$, we have a multi-state system with multi-state components.
\end{itemize}
We basically follow here the notation in \cite{GN17} and \cite{N11} but we allow a more general kind of systems, since we do not restrict to the case that $\max(\Sc)\leq\max(\Sc_i)\,\forall i$. For other definitions of multi-state systems and a review of multi-state reliability analysis, we refer to \cite{EPS78, LL03,YJ12, LFK17} and the references therein.


Let $S$ be a coherent system with $n$ components and let $\Fc_{S,j}$ be the set of tuples of components' states $\xb$ such that $\phi(\xb)\geq j$ for some $0<j\leq m$. The elements of $\Fc_{S,j}$ are called {\it $j$-working states} of $S$. Let $\overline{\Fc}_{S,j}$ be the set of minimal $j$-working states, i.e. states in $\Fc_{S,j}$ such that the degradation of the performance of any component provokes that the overall performance of the system is degraded to some $j'<j$. Let now $R=\kb[x_1,\dots,x_n]$ be a polynomial ring over a field $\kb$. Each tuple of components' states $(s_1,\dots,s_n)\in \Sc_1\times\cdots \times \Sc_n$ corresponds to the monomial  $x_1^{s_1}\cdots x_n^{s_n}$ in $R$. The {\it coherence property} of the system is equivalent to saying that the elements of $\Fc_{S,j}$ correspond to the monomials in an ideal, denoted by $I_{S,j}$ and called the {\it $j$-reliability ideal} of $S$. The unique minimal monomial generating set of $I_{S,j}$, denoted $G(I_{S,j})$, is formed by the monomials corresponding to the elements of $\overline{\Fc}_{S,j}$ (see \cite[\S 2]{SW09} for more details). Hence, obtaining the set of minimal cuts of $S$ amounts to compute the minimal generating set of $I_{S,j}$.  

\smallskip

In order to compute the {\it $j$-reliability} of $S$ (i.e. the probability that the system is performing at least at level $j$) we can use the numerator of the Hilbert series of $I_{S,j}$, denoted by $H_{I_{S,j}}$. The polynomial $H_{I_{S,j}}$ gives a formula, in terms of $x_1,\dots,x_n$ that enumerates all the monomials in $I_{S,j}$, i.e. the monomials corresponding to the states in $\Fc_{S,j}$. Hence, computing the (numerator of) the Hilbert series of $I_{S,j}$ provides a way to compute the $j$-reliability of $S$ by substituting $x_i^a$ by $p_{i,a}$, the probability that the component $i$ is at least performing at level $a$, as explored in \cite[\S 2]{SW09} (for the binary case).

\smallskip

Often in practice it is more useful to have {\it bounds} on the $j$-reliability of $S$ rather than the exact formula. In order to have a formula that can be truncated at different summands to obtain bounds for the $j$-reliability in the same way that we truncate the inclusion-exclusion formula to obtain the so-called Bonferroni bounds, we need a special way to write the numerator of the Hilbert series of $I_{S,j}$. This convenient form is given by the alternating sum of the ranks in any free resolution of the ideal $I_{S,j}$. Every monomial ideal $I$ has a {\em minimal} free resolution, which provides the tightest bounds among the aforementioned ones. The ranks of the free modules in the minimal free resolution are called the {\em Betti numbers} of the ideal and are denoted by $\beta_i(I)$, or by $\beta_{i,j}(I)$ in the graded case. In general, the closer the resolution is to the minimal one, the tighter the bounds obtained, see e.g. \cite[\S 3]{SW09}.

In summary, the algebraic method for computing the $j$-reliability of a coherent system $S$ works as follows: 
\begin{enumerate}
\item{Associate to the system $S$ its $j$-reliability ideal $I_{S,j}$}.
\item{Obtain the minimal generating set of $I_{S,j}$ to get the set $\overline{\Fc}_{S,j}$.}
\item{Compute the Hilbert series of $I_{S,j}$ to have the $j$-reliability of $S$.}
\item[(3')]{Compute any free resolution of $I_{S,j}$. The alternating sum of the ranks of this resolution gives a formula for the Hilbert series of $I_{S,j}$ i.e., the unreliability of $S$, which provides bounds by truncation at each summand.}
\end{enumerate}

The choice between steps (3) or (3') depends on our needs. If we are only interested in computing the full reliability formula, then we can use any algorithm that computes Hilbert series in step (3). However, if we need bounds for our system reliability, then 
we can compute any free resolution of $I_{S,j}$ and thus perform step (3'). If the performing probabilities of the different components are independent and identically distributed (i.i.d), then in points (3) and (3') of this procedure we only need the graded version of  Hilbert series and free resolutions. Otherwise, we need their multigraded version. For more details and the proofs of the results described here, we refer to \cite{SW09,SW12}. To see more applications of this method in reliability analysis we refer to \cite{SW10,SW11,SW15}.

\section{Simple multi-state $k$-out-of-$n$ systems} \label{Sec:FirstMS_KN}
The first definition of multi-state $k$-out-of-$n$ systems was given by El-Neweihi et al. in the seminal work \cite{EPS78}. They define multi-state systems as follows:
\begin{Definition}[El-Neweihi et al., $1978$]\label{MS-EN}
	A system of $n$ components is said to be a \emph{multi-state coherent system (MCS)} if its structure function $\phi$ satisfies:
	\begin{enumerate}
		\item $\phi$ is increasing.
		\item For level $j$ of component $i$, there exists a vector $(\cdot_i,\xb)$ such that $\phi(j_i, \xb)=j$ while $\phi(l_i,\xb)\neq j$ for $l\neq j$ for $i=1,\dots,n$ and $j=0,\dots,M$.
		\item $\phi(\textbf{j})=j$ for $j=0,\dots,M$, where $\textbf{j}=(j,\dots,j)$.
	\end{enumerate}
\end{Definition}
Where $(j_i, \xb)$ means that the state of the $i$'th component in $\xb$ is $j$. Observe that this definition is more restrictive than ours in the sense that they assume every component has the same number of states, which is in turn the number of states of the system, i.e. $M$. 

The definiton of multi-state $k$-out-of-$n$ systems in \cite{EPS78} is:

\begin{Definition}[El-Neweihi, $1978$] \label{Def:KN_EN_78}
	A system is a \emph{multi-state $k$-out-of-$n$ system} if its structure function satisfies
	\begin{equation}\label{eq:KN_EN_78}
	 \phi(\xb)=x_{(n-k+1)}
	 \end{equation}
	 where $x_{(1)}\leq x_{(2)}\leq\dots\leq x_{(n)}$ is a non decreasing arrangement of $x_1,\dots, x_n$. 
\end{Definition}

Observe that this definition satisfies the conditions given in Definition \ref{MS-EN}. It is easy to check that $\phi$ is an increasing function and $\phi(\textbf{j})=j$ for all $j=0,\dots,M$. To see condition $(2)$ just observe that there always exists a non decreasing arrangement of $x_1,\dots, x_n$ in which  $\phi(j_i, \xb)=j$ while $\phi(l_i,\xb)\neq j$ for $l\neq j$ for $i=1,\dots,n$ and $j=0,\dots,M$. Taking the vector in which the first $n-k+1$ components are lower than $j$ and the rest of the are greater than $j$, we have that condition $(2)$ is satisfied.

\begin{Remark}
	This kind of systems are called \emph{simple multi-state $k$-out-of-$n$ systems} in \cite{KZ03}.
\end{Remark}

We describe now the $j$-reliability ideal of these multi-state $k$-out-of-$n$ systems: 

\begin{Proposition}\label{Prop:ReliabilityKN_EN_BK}
	The ideal \[I_{(k,n),j}=\langle \prod_{\substack{\sigma\subseteq\lbrace1,\dots,n\rbrace \\ \vert\sigma\vert=k}} x_i^j\ \vert\, i\in\sigma \rangle\] is the $j$-reliability ideal of a multi-state $k$-out-of-$n$ system as defined in Definition \ref{Def:KN_EN_78}.
\end{Proposition}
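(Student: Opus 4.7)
The plan is to unfold both sides of the claim directly from definitions and then match them. On the system side, Definition 3.2 says $\phi(\xb)=x_{(n-k+1)}$, so the condition $\phi(\xb)\ge j$ is equivalent to $x_{(n-k+1)}\ge j$, which in turn is equivalent to the combinatorial statement that at least $k$ of the coordinates $x_1,\dots,x_n$ are $\ge j$. That will be the one technical observation; everything else is bookkeeping.

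First I would identify the set $\overline{\Fc}_{S,j}$ of minimal $j$-working states. Given the characterization above, a $j$-working state must put at least $k$ components at level $\ge j$; to be minimal in the componentwise order, each of those $k$ components must be at level exactly $j$ (lowering any further would break the $k$-threshold), and the remaining $n-k$ components must all be at level $0$. So $\overline{\Fc}_{S,j}$ is precisely the collection of tuples of the form $(s_1,\dots,s_n)$ where $s_i=j$ for $i\in\sigma$ and $s_i=0$ for $i\notin\sigma$, as $\sigma$ ranges over the $k$-subsets of $\{1,\dots,n\}$. Under the monomial correspondence from Section 2, these are exactly the monomials $\prod_{i\in\sigma} x_i^j$ with $|\sigma|=k$, which are the generators of $I_{(k,n),j}$.

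Next I would invoke the framework recalled in Section 2: for a coherent system, $\Fc_{S,j}$ is exactly the set of monomials in the $j$-reliability ideal, and this ideal is minimally generated by the monomials coming from $\overline{\Fc}_{S,j}$. Thus it suffices to check two things. One, every monomial $x_1^{s_1}\cdots x_n^{s_n}$ with $\phi(\xb)\ge j$ is divisible by some $\prod_{i\in\sigma}x_i^j$: indeed, pick any $\sigma\subseteq\{i:s_i\ge j\}$ of size $k$, which exists by the threshold characterization. Two, no proper submultiset of these generators suffices, i.e. no generator divides another; but any two generators $\prod_{i\in\sigma}x_i^j$ and $\prod_{i\in\tau}x_i^j$ corresponding to distinct $k$-subsets are incomparable in the divisibility order, so the set is automatically a minimal generating set.

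The only subtle point, and hence the main (very mild) obstacle, is the equivalence $x_{(n-k+1)}\ge j \iff |\{i:x_i\ge j\}|\ge k$: if the $(n-k+1)$-st order statistic is at least $j$, then so are the $k$ values above it, and conversely if $k$ coordinates meet the threshold then among the top $n-k+1$ order statistics at least one must equal the smallest of those $k$, forcing $x_{(n-k+1)}\ge j$. Once this is recorded, the identification of $I_{S,j}$ with $I_{(k,n),j}$ follows immediately from the definition of the $j$-reliability ideal and its minimal generating set, completing the proof.
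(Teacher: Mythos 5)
Your proof is correct and follows essentially the same route as the paper: both rest on the observation that $\phi(\xb)=x_{(n-k+1)}\geq j$ exactly when at least $k$ coordinates are $\geq j$, and then match this with divisibility by the monomials $\prod_{i\in\sigma}x_i^j$. The only cosmetic difference is that you organize the argument around identifying $\overline{\Fc}_{S,j}$ and invoking the minimal-generator correspondence from Section 2, whereas the paper directly checks that the monomials of $I_{(k,n),j}$ coincide with those of $\Fc_{S,j}$; the content is the same.
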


\begin{proof}
	First of all we need to check that all $\mu\in G(I_{(k,n),j})$ satisfy $\phi(\mu)=j$. Let $x^\mu=x_{i_1}^j x_{i_2}^j\dots x_{i_k}^j$ be a generator of $I_{(k,n),j}$, with $\lbrace i_{1},\dots, i_k\rbrace\subseteq \lbrace 1,\dots,n\rbrace$. If we make a non decreasing arrangement of $x_{i_1},\dots, x_{i_k}$ we obtain the vector $(0,...,0,j,...,j)$ in which the first $n-k$ components are in state $0$ and the other components are in state $j$. Applying the structure function $\phi$ to this vector we have that $\phi(0,...,0,j,...,j)=j$.
	
	Now, if $x^\nu\in I_{(k,n),j}$, there exists $x^\mu\in G(I_{(k,n),j})$ such that $\mu\ \leq \nu$. This implies $\phi(\mu)\leq \phi(\nu)$ and since $\phi(\mu)=j$ and $\phi$ is an increasing function, we obtain $\phi(\nu)\geq j$.
	
	Finally if $l<j$ and $\phi(\nu)=l$ we must have $x^\nu\not\in I_{(k,n),j}$. Since $\phi(\nu)=l<j$ we have that there are at most, $k-1$ variables with exponent greater or equal $j$. This implies that there does not exist any $\sigma\in\lbrace 1,\dots,n\rbrace$ with $\vert\sigma\vert=k$ such that $\prod_{x_i\in\sigma} x_i^j\ \vert\ x^\nu$, hence $x^\nu\notin I_{(k,n),j}$. 
\end{proof}

In \cite{BK94} Boedigheimer and Kapur define customer-driven reliability models for multi-state systems. They consider systems with $M$ states in which component $i$ can be in $M_i$ states. They describe such systems using {\em upper and lower boundary points}, which are enough to describe the system completely and are defined as follows 

\begin{Definition}
	We say $\xb$ is a \textit{lower boundary point {\rm (l.b.p.)} to level $j$} iff $\phi(\xb)\geq j$ and $\yb<\xb$ implies that $\phi(\yb)<j$, for $j=1,\dots,M$.
	An \textit{upper boundary point {\rm (u.b.p)} to level $j$} is an $n$-tuple $\xb$ such that $\phi(\xb)\leq j$ and $\yb>\xb$ implies that $\phi(\yb)>j$, for $j=0,\dots,M-1$.
\end{Definition}

Observe that the lower boundary points to level $j$ are the minimal monomial generators of the $j$-reliability ideal of the system. To describe upper boundary points algebraically we need the concept of maximal standard pairs \cite{STV95}.

\begin{Definition}
Let $I$ a monomial ideal in $R=\kb[x_1,\dots,x_n]$ and $\sigma\subseteq\{1,\dots,n\}$. The pair $(x^\mu,\sigma)$ is a {\em standard pair} for $I$ if it satisfies:
\begin{itemize}
\item[-] $\supp(x^\mu)\cap\sigma=\emptyset$, where $\supp(x^\mu)$ is the set of indices $i\in\{1,\dots,n\}$ such that $x_i$ divides $x^\mu$.
\item[-] for all monomials $x^\nu$ such that $\supp(x^\nu)\subseteq\sigma$ we have that $x^\mu x^\nu\notin I$.
\item[-] $(x^\mu,\sigma)\not\subseteq(x^\nu,\tau)$ for any other $(x^\nu,\tau)$ satisfying the two previous conditions.
\end{itemize}
We say that $(x^\mu,\sigma)$ is a {\em maximal standard pair} if there is no other standard pair $(x^\nu,\sigma)$ such that $x^\mu$ divides $x^\nu$.
\end{Definition}

Maximal standard pairs are in one-to-one correspondence with upper boundary points.

\begin{Theorem}
Let $I_{S,j}$ be the $j$-reliability ideal of a coherent system $S$ for which component $i$ can be in states  $(0,\dots,M_i)$. Then $\mu+\sum_{i\in\sigma} 1_{M_i}$ is an upper boundary point of $S$ for level $j-1$ if and only if $(x^\mu,\sigma)$ is a maximal standard pair of $I_{S,j}$.
\end{Theorem}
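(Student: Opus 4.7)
The plan is to establish a bijection between maximal standard pairs of $I_{S,j}$ and upper boundary points of $S$ for level $j-1$. Writing $e_i$ for the $i$-th standard basis vector, the assignment is $(x^\mu,\sigma)\mapsto \xb = \mu + \sum_{i\in\sigma} M_i e_i$, with inverse $\xb\mapsto (\mu(\xb),\sigma(\xb))$ where $\sigma(\xb) = \{i : \xb_i = M_i\}$ and $\mu(\xb) = \xb - \sum_{i\in\sigma(\xb)} M_i e_i$. The key technical tool is the following dictionary: for a pair $(x^\mu,\sigma)$ with $\supp(x^\mu)\cap\sigma = \emptyset$ and $\mu_i \le M_i$ for all $i$, condition~(2) of the standard-pair definition is equivalent to $\phi(\xb) < j$. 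Indeed, since every minimal generator of $I_{S,j}$ has exponents bounded by the $M_i$, the test $x^\mu x^\rho \in I_{S,j}$ for $\rho$ with $\supp(x^\rho)\subseteq \sigma$ is unaffected by replacing each $\rho_i$ with $\min(\rho_i, M_i)$, and for such bounded $\rho$ the monomial $x^\mu x^\rho$ divides $x^{\xb}$.

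For the direction $(\Rightarrow)$, let $\xb$ be an upper boundary point for level $j-1$ and set $\sigma, \mu$ as above. Conditions~(1) and~(2) follow from the construction and the dictionary. For~(3), any strict admissible extension $(x^\nu,\tau)\supsetneq (x^\mu,\sigma)$ must have $\tau\supsetneq \sigma$ (strict same-$\sigma$ set inclusion between admissible pairs is impossible, as exponents on $\sigma$ are forced to vanish), and the point $\xb' = \nu + \sum_{i\in\tau} M_i e_i$ is then a state with $\xb'>\xb$ and $\phi(\xb') < j$, contradicting the maximality of $\xb$. The final maximality condition on $(x^\mu,\sigma)$ is handled in the same spirit: any standard pair $(x^{\mu'},\sigma)$ with $x^\mu \mid x^{\mu'}$ strictly produces, after truncating each $\mu'_i$ to $\min(\mu'_i, M_i)$ if necessary, a state above $\xb$ with $\phi<j$.

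For the direction $(\Leftarrow)$, let $(x^\mu,\sigma)$ be a maximal standard pair. I would first verify $\mu_i < M_i$ for every $i\notin \sigma$: otherwise the pair $(x^{\mu - M_i e_i}, \sigma\cup\{i\})$ (with further truncation in case $\mu_i > M_i$) is admissible and strictly contains $(x^\mu,\sigma)$, violating~(3). Hence $\xb := \mu + \sum_{i\in\sigma} M_i e_i$ is a genuine state with $\sigma(\xb) = \sigma$ and, by the dictionary, $\phi(\xb) < j$. To show $\xb$ is an upper boundary point, suppose some state $\yb > \xb$ has $\phi(\yb) < j$, and let $\xb^*$ be a maximal state with $\xb^*\ge \yb$ and $\phi(\xb^*)<j$. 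Then $\xb^*$ is itself an upper boundary point; applying $(\Rightarrow)$ to $\xb^*$ yields a maximal standard pair $(x^{\mu^*},\sigma^*)$ with $\sigma^* \supseteq \sigma$, the latter inclusion holding because the coordinates in $\sigma$ are already saturated at $M_i$. If $\sigma^* = \sigma$, then $\mu^* > \mu$, directly contradicting the maximality of $(x^\mu,\sigma)$. If $\sigma^* \supsetneq \sigma$, set $\tilde\mu_i = 0$ for $i\in\sigma^*$ and $\tilde\mu_i = \mu_i$ otherwise; since $\tilde\mu + \sum_{i\in\sigma^*} M_i e_i$ divides $\xb^*$ componentwise and $x^{\xb^*}\notin I_{S,j}$, the dictionary makes $(x^{\tilde\mu},\sigma^*)$ admissible, and one checks that it strictly contains $(x^\mu,\sigma)$, again contradicting condition~(3).

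The main technical obstacle is the last case of $(\Leftarrow)$: when $\sigma\subsetneq\sigma^*$, the witness $(x^{\tilde\mu},\sigma^*)$ to the failure of condition~(3) for $(x^\mu,\sigma)$ is constructed from the ``larger'' upper boundary point $\xb^*$, whose existence is established only by invoking the already-proved direction $(\Rightarrow)$. This dependency gives the two halves of the proof an intrinsically asymmetric flavour.
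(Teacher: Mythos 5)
Your proof is correct and follows essentially the same route as the paper's: the same correspondence $\xb \leftrightarrow (x^\mu,\sigma)$ with $\xb=\mu+\sum_{i\in\sigma}1_{M_i}$, and the same key observation that, because the minimal generators of $I_{S,j}$ have exponents bounded by the $M_i$, membership of $x^\mu x^\rho$ in $I_{S,j}$ for $\rho$ supported on $\sigma$ reduces to membership of $x^{\xb}$. The only real difference is organizational: in the converse direction you pass through a maximal state above $\xb$ and re-invoke the forward implication with a case split on $\sigma^*$ versus $\sigma$, where the paper argues directly with $\beta=\alpha+1_i$; your version is somewhat more explicit about condition (3) of the standard-pair definition and about $\mu_i<M_i$ for $i\notin\sigma$, points the paper leaves implicit.
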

\begin{proof}

\noindent{$\implies$)}
Let $\alpha$ be an upper boundary point of $S$ for level $j-1$. Let $\sigma\subseteq \{1,\dots,n\}$ be the set of components of $S$ such that $\alpha_i=M_i$. We have that $\sigma\neq\{1,\dots,n\}$ i.e. there exists at least one component $i$ such that $\alpha_i\neq M_i$ hence $\alpha$ is of the form $\alpha=\mu+\sum_{i\in\sigma}1_{M_i}$. $\phi(\alpha)<j$ implies $x^\alpha\notin I_{S,j}$, and we claim that $(\mu,\sigma)$ is a standard pair for $I_{S,j}$. To see this, let $x^\mu x^\nu$ such that $\supp(x^\nu)\subseteq \sigma$. If $\nu_i\leq M_i$ then clearly $x^\mu x^\nu\notin I_{S,j}$ because $\mu+\nu\leq\alpha$ and $\phi(\alpha)<j$. Now, since $x^\alpha\notin I_{S,j}$ we know there is no minimal generator of $I_{S,j}$ that divides $x^\alpha$ and since $M_i=\alpha_i$ is the maximal power to which variable $i$ can possibly be raised to in any generator of $I_{S,j}$ then no generator will divide $x^\alpha x^\nu$ for any $\nu$ such that $\supp(x^\nu)\subseteq \sigma$ hence $(\mu,\sigma)$ is a standard pair. Assume now that $(\mu,\sigma)$ is not maximal. Then there is some $i'\notin\sigma$ such that $(\mu+1_{i'},\sigma)$ is a standard pair for $I_{S,j}$. Then $x^\mu x_{i'}\prod_{i\in\sigma}x_i^{M_i}\notin I_{S,j}$  i.e. $\phi(\alpha+1_i)<j$ which contradicts the assumption that $\alpha$ is an upper boundary point of $S$ for level $j-1$.

\noindent{$\Leftarrow$)}
Let $(x^\mu,\sigma)$ be a maximal standard monomial of $I_{S,j}$, i.e. $x^\mu\notin I_{S,j}$ and $x^\mu x^\nu\notin I_{S,j}$ for all $x^\nu$ such that $\supp(x^\nu)\subseteq\sigma$. Let $x^\alpha=x^\mu\prod_{i\in\sigma}x_i^{M_i}$. Since $x^\alpha\notin I_{S,j}$ we know that $\phi(\alpha)<j$.
Let now $\beta>\alpha$, we can assume without loss of generality that $\beta=\alpha+1_i$ for some $i\notin \sigma$. Suppose $x^\beta\notin I_{S,j}$. Then there is no minimal generator of $I_{S,j}$ that divides $x^\beta$ but since $M_i$ is the maximal state of component $i$, then there is no minimal generator of $I_{S,j}$ that divides $x^\beta x^\nu$ for any $\nu$ such that its support is a subset of $\sigma$. Finally since the difference between $x^\mu x_i$ and $x^\beta$ is a monomial whose support is in $\sigma$, we have that $(x^\mu x_i,\sigma)$ is a standard pair for $I_{S,j}$, which is in contradiction with the fact that $(x^\mu,\sigma)$ is maximal, hence $x^\beta\in I_{S,j}$ and $\alpha$ is an upper boundary point of $S$ for level $j-1$.
\end{proof}

Using upper and lower boundary points, Boedigheimer and Kapur define multi-state $k$-out-of-$n$ systems as follows.

\begin{Definition}[Boedigheimer and Kapur, $1994$]\label{Def:KN_BK_94}
	$\phi$ is a multi-state $k$-out-of-$n:G$ structure function if, and only if, $\phi$ has ${n \choose k}$ lower boundary points to level $j$ $(j=1,\dots, M)$ and ${n \choose k-1}$ upper boundary points to level $j$ $(j=0,\dots, M-1)$.
\end{Definition}

The minimal generating set of the ideal $I_{(k,n),j}$ in Proposition \ref{Prop:ReliabilityKN_EN_BK} has ${n\choose k}$ elements, i.e. this system has ${n\choose k}$ lower boundary points. The maximal standard pairs of $I_{(n,k),j}$ are $(\prod_{i\in\sigma} x_i^{j-1},\{1,\dots,n\}-\sigma)$ for all $\sigma\subseteq\{1,\dots,n\}$ such that $\vert\sigma\vert=n-k+1$, i.e. the number of upper boundary points of $S$ for $j-1$ is ${n\choose {n-k+1}}={n\choose {k-1}}$. Hence, Proposition \ref{Prop:ReliabilityKN_EN_BK} is a proof of the equivalence of definitions \ref{Def:KN_EN_78} and \ref{Def:KN_BK_94} in the case that $M_i=M$ for all $i$.

If we allow that the number of states of each of the components can be different, then the situation is more complicated. Let $n_j$ be the number of components such that their maximum performance level $M_i$ is bigger than or equal to $j$. If $n_j\geq k$ then the system behaves as a multi-state $k$-out-of-$n$ system by setting $\phi$ as in Definition \ref{Def:KN_EN_78}. The number of lower and upper boundary points does however vary. The lower boundary points are given by the tuples that have $k$ components at level $j$ and $n-k$ components at level $0$, and there are ${{n_j}\choose k}$ such tuples. And if $n_j\geq k$ then the upper boundary points for level $j$ are given by the tuples in which $k-1$ components are at their maximum level (strictly bigger than $j$), the other component such that its maximum level is bigger than $j$ is exactly at level $j$ and the rest of the components are at level $\min\{M_i,j\}$. The number of such tuples is ${n_{j+1}\choose k}$. Hence the system behaves at level $j$ as a $k$-out-of-$n_j$ system according to definition \ref{Def:KN_BK_94}. In fact, if we only consider those components whose maximum performance level is bigger than $j$ then the system behaves at level $j$ as a $k$-out-of-$n_j$ system according to both definitions.

We can then generalize the ideal in Proposition \ref{Prop:ReliabilityKN_EN_BK} allowing different number of levels for each component:
\begin{Definition}\label{def:K_N_alg}
Let $S$ be a multi-state system with levels $\{0,\dots,M\}$ and such that each component $i$ has $M_{i+1}$ levels of performance $\{0,\dots, M_i\}$. Let $n_j\leq n$ be the number of components such that $M_i\geq j$ for each $j\in\{0,\dots,M\}$ (for ease of notation we consider that these are components $1,\dots,n_j$).
 $S$ is a multi-state $k$-out-of-$n$ system if for every $j\in\{1,\dots,M\}$  the $j$-reliability ideal of $S$, $I_{S,j}$, is of the form
 \[
 I_{S,j}=\langle \prod_{\substack{\sigma\subseteq\lbrace 1,\dots, n_j\rbrace \\ \vert\sigma\vert=k}} x_i^j\ \vert\, i\in\sigma \rangle.
 \] 
\end{Definition}

\begin{Example}\label{Ex:KN_BK}
	Let $S$ be a system such that $\mathcal{S}_1=\{0,1,2,3,4\},\, \mathcal{S}_2=\{ 0,1,2,3\},\, \mathcal{S}_3=\mathcal{S}_4=\{0,1,2\}$ and $\mathcal{S}_5=\{ 0,1\}$ and let $\phi(\xb)=x_{(4)}$. Observe that $n_1=5$, $n_2=4$, $n_3=2$, $n_4=1$. The system behaves as a $2$-out-of-$5$ for levels $j=1,2,3$ according to Definition \ref{Def:KN_EN_78} and as a $2$-out-of-$n_j$ system for levels $j=1,2,3$ according to Definition \ref{Def:KN_BK_94}. The lower and upper boundary points are given in Table \ref{table:exampleKN}.
	
\begin{center}
\begin{small}{ \begin{table}
   \begin{tabular}{cll}
	Level&Lower boundary points &Upper boundary points \cr
	\hline
	$0$& &$(0,0,0,0,1),(0,0,0,2,0),(0,0,2,0,0),$ \cr
	& &$(0,3,0,0,0),(4,0,0,0,0)$ \cr
	$1$&$(0,0,0,1,1),(0,0,1,0,1),(0,1,0,0,1),(1,0,0,0,1),$&$(1,1,1,2,1),(1,1,2,1,1),(1,3,1,1,1),$ \cr
	&$(0,0,1,1,0),(0,1,0,1,0),(1,0,0,1,0),(0,1,1,0,0)$&$(4,1,1,1,1)$\cr
	&$(1,0,1,0,0),(1,1,0,0,0)$&\cr
	$2$&$(0,0,2,2,0),(0,2,0,2,0),(2,0,0,2,0),(0,2,2,0,0),$&$(2,3,2,2,1),(4,2,2,2,1)$\cr
	&$(2,0,2,0,0),(2,2,0,0,0)$& \cr
	$3$&$(3,3,0,0,0)$& \cr
	\hline
  \end{tabular}\caption{Upper and lower boundary points for the system in Example \ref{Ex:KN_BK}}\label{table:exampleKN}		
 \end{table}
 }\end{small}
\end{center}
The reliability ideals for this system are
\begin{align*}
I_{S,1}&=\langle x_1x_2,x_1x_3,x_1x_4,x_1x_5,x_2x_3,x_2x_4,x_2x_5,x_3x_4,x_3x_5,x_4x_5\rangle\\
I_{S,2}&=\langle x_1^2x_2^2,x_1^2x_3^2,x_1^2x_4^2,x_2^2x_3^2,x_2^2x_4^2,x_3^2x_4^2\rangle\\
I_{S,3}&=\langle x_1^3x_2^3\rangle.
\end{align*}

\end{Example}

%
%
%
%

\section{Generalized multi-state $k$-out-of-$n$ systems}\label{Sec:GeneralizedKN}
In \cite{HZW00} Huang, Zuo and Wu introduced {\em generalized multi-state $k$-out-of-$n$ systems} allowing different number of components for a system to perform at each level $j$ naturally extending the capabilities of the systems studied in the previous section and providing more flexibility to describe practical situations. The definition in \cite{HZW00} is the following

\begin{Definition}[Huang, Zuo and Wu, $2000$]\label{Def:KN_HZW_00}
	An $n$-component system is called a \emph{generalized multi-state $k$-out-of-$n$:G system} if $\phi(\xb)>j,\ 1\leq j\leq M$ whenever there exists an integer value $l$ $(j\leq l\leq M)$ such that at least $k_l$ components are in state $l$ or above.
\end{Definition}

If we denote by $\phi$ the structure function of the system $S$ and by $N_j$ the number of components in state $j$ or above, then this definition can be rephrased by saying that $\phi(S)\geq j$ if

\begin{align*} 
N_j &\geq  k_j \\ 
N_{j+1} &\geq  k_{j+1} \\
  \,&\,\vdots \\
N_M &\geq k_M 
\end{align*}

Hence we can denote a generalized multi-state $k$-out-of-$n$ system by $S_{n,(k_1,\dots,k_M)}$. When $k_1\leq\cdots\leq k_m$ the system is called an {\em increasing} generalized multi-state $k$-out-of-$n$:G system, and if $k_1\geq\cdots \geq k_m$ the system is said to be {\em decreasing}. Huang {\em et al.} provide formulas for both cases and an enumerative algorithm for the evaluation of the reliability of generalized multi-state $k$-out-of-$n$ systems when the sequence $(k_1,\dots,k_M)$ is monotone.

Continuing this line M. J. Zuo and Z. Tian defined in \cite{ZT06} generalized multi-state $k$-out-of-$n$:F systems.

\begin{Definition}[Zuo and Tian, $2006$]\label{Def:KN_ZT_06}
An $n$-component system is called \emph{generalized multi-state $k$-out-of-$n:F$ system} if $\phi(\xb)<j,\ 1\leq j\leq M$ whenever the states of at least $k_l$ components are below $l$ for all $l$ such that $j\leq l\leq M$.
\end{Definition}

Using this definition they provide a correspondence between generalized multi-state $k$-out-of-$n$:G systems and generalized multi-state $k$-out-of-$n$:F systems. They study these systems when the sequence $(k_1,\dots, k_M)$ is not necessarily monotone and provide an efficient algorithm that is recursive on $M$, the number of performance levels. This algorithm outperforms the one in \cite{HZW00} which is recursive in $n$.

Using the ideals in Proposition \ref{Prop:ReliabilityKN_EN_BK} we can immediately describe the reliability ideal of a generalized multi-state $k$-out-of-$n$:G system given by $(k_1,\dots,k_M)$.

\begin{Proposition}\label{Prop:genMSKN}
The $j$-reliability ideal of a generalized multi-state $k$-out-of-$n$ system $S=S_{n,(k_1,\dots,k_M)}$ is given by
\[
I_{S,j}=I_{n,(k_j,\dots,k_M)}=\sum_{i=j}^{M}I_{(k_i,n),i}.
\]
\end{Proposition}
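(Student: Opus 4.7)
The plan is to verify equality of the two monomial ideals by showing they contain exactly the same monomials. By the correspondence recalled in Section \ref{Sec:Prelim}, the reliability ideal $I_{S,j}$ consists of all monomials $x^{\xb}$ such that $\phi(\xb) \geq j$. Unpacking Definition \ref{Def:KN_HZW_00}, $\phi(\xb) \geq j$ holds precisely when there exists some $l \in \{j, \ldots, M\}$ with $N_l(\xb) \geq k_l$, where $N_l(\xb)$ counts the components of $\xb$ at level $l$ or above. Hence it suffices to establish the biconditional
\[
x^{\xb} \in \sum_{i=j}^{M} I_{(k_i,n),i} \iff \exists\, l \in \{j,\ldots,M\} \text{ with } N_l(\xb) \geq k_l.
\]

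First I would record the following monomial-level characterization coming from Proposition \ref{Prop:ReliabilityKN_EN_BK}: $x^{\xb}$ lies in $I_{(k_l,n),l}$ if and only if some minimal generator $\prod_{i\in\sigma} x_i^{l}$ with $|\sigma|=k_l$ divides $x^{\xb}$, which is exactly the statement that at least $k_l$ of the exponents $\xb_i$ are $\geq l$, i.e.\ $N_l(\xb) \geq k_l$. Next I would invoke the standard fact that for a sum of monomial ideals, a monomial lies in the sum if and only if it lies in at least one summand. Chaining these two equivalences yields the displayed biconditional, which together with the preceding paragraph proves $I_{S,j} = \sum_{i=j}^{M} I_{(k_i,n),i}$.

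The argument is essentially careful bookkeeping and I do not expect a real obstacle. The only point demanding attention is to recognize that the existential quantifier in Definition \ref{Def:KN_HZW_00} is encoded by the \emph{sum} (rather than the intersection) of the ideals $I_{(k_i,n),i}$: a $j$-working state need not satisfy $N_l(\xb) \geq k_l$ for all $l \geq j$ simultaneously, but only for at least one such $l$. With this understood, the equality follows directly, and the middle expression $I_{n,(k_j,\ldots,k_M)}$ in the statement is simply the shorthand for the displayed sum.
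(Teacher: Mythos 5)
Your argument is correct and is exactly the reasoning the paper leaves implicit: the authors state the proposition without proof, calling it an immediate consequence of Proposition \ref{Prop:ReliabilityKN_EN_BK}, and the intended justification is precisely your chain of equivalences (membership in $I_{(k_l,n),l}$ means $N_l(\xb)\geq k_l$, membership in a sum of monomial ideals means membership in some summand, and the existential clause of Definition \ref{Def:KN_HZW_00} matches the disjunction). Your remark that the sum, not the intersection, encodes the existential quantifier is the one genuinely non-trivial observation, and you have it right.
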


\begin{Example}\label{ex:alg_gen_multi-state}
We study here Example 8 in \cite{HZW00} with the algebraic method and recover the exact same results given there. The system in this example is a generalized multi-state $k$-out-of-$3$:G system with four states $(0,1,2,3)$ such that $k_1=3$, $k_2=2$ and $k_3=2$, hence it is a {\em decreasing} generalized multi-state $k$-out-of-$n$:G system. The probabilities of the different components are given by $p_{1,0}=0.1$, $p_{1,1}=0.2$, $p_{1,2}=0.3$, $p_{1,3}=0.4$, $p_{2,0}=0.1$, $p_{2,1}=0.1$, $p_{2,2}=0.2$, $p_{2,3}=0.6$, $p_{3,0}=0.1$, $p_{3,1}=0.2$, $p_{3,2}=0.4$, $p_{3,3}=0.3$, where $p_{i,j}$ is the probability that component $i$ is performing at level $j$.
\begin{itemize}
\item[-] For the system to be in state $3$ there must be at least  $2$ components in state $3$ or above ($k_3=2$). Hence the corresponding ideal is $I_{S,3}=\langle x^3y^3,x^3z^3,y^3z^3\rangle$. The numerator of the Hilbert series is $H_{I_{S,3}}=x^3y^3+x^3z^3+y^3z^3-2(x^3y^3z^3)$ and when plugging the probabilities in, we have that the probability that the system is in state $3$ or above, denoted $R_{S,3}$, is $0.396$, which equals the probability that the system is exactly in state $3$, denoted $r_{S,3}$.
\item[-] The system is in state $2$ or above if at least $2$ components are in state $2$ or above, hence $I_{S,2}=I_{(2,3),2}+I_{(2,3),3}=I_{(2,3),2}=\langle x^2y^2,x^2z^2,y^2z^2\rangle$. The numerator of the Hilbert series is $H_{I_{S,2}}=x^2y^2+x^2z^2+y^2z^2-2(x^2y^2z^2)$ and we obtain $R_{S,2}=0.826$ and $r_{S,2}=R_{S,2}-R_{S,3}=0.826-0.396=0.430$.
\item [-] Since $k_1=3$ the system is in state $1$ or above if all $3$ components are in state $1$ or above or if at least $2$ components are in state $2$ or above or if at least $2$ components are in state $3$ or above. The corresponding ideal is then $I_{S,1}=I_{(3,3),1}+I_{(2,3),2}+I_{(2,3),3}=I_{(3,3),1}+I_{(2,3),2}=\langle xyz,x^2y^2,x^2z^2,y^2z^2\rangle$, $H_{I_{S,1}}=xyz+x^2y^2+x^2z^2+y^2z^2-(xy^2z^2+x^2yz^2+x^2y^2z)$ and we obtain $R_{S,1}=0.89$ and $r_{S,1}=R_{S,1}-R_{S,2}=0.89-0.826=0.064$.
\item[-] Finally $r_{S,0}=R_{S,0}-R_{S,1}=1-0.89=0.11$.
\end{itemize}
\end{Example}

Using the reliability ideals of generalized multi-state $k$-out-of-$n$:G systems given in Proposition \ref{Prop:genMSKN} we can develop a recursive method to compute their reliability. The method is recursive on $M$, the number of performance levels and can be used for any sequence $(k_1,\dots,k_M)$ describing the system, not necessarily monotone. This method is an enumerative one that can be used even when the component's probabilities are not i.i.d. For the i.i.d. case our method is equivalent to the one in \cite{ZT06} in terms of computational complexity. We will use the technique of Mayer-Vietoris trees, which were introduced in \cite{S08,S09}, see Appendix \ref{appendix:MVT} for an explanation of the method. For ease of the notation we assume that the sequence $(k_1,\dots,k_M)$ is strictly decreasing. In any other case, the only difference is that some of the summands that compose the ideal $I_{n,(k_j,\dots,k_M)}$ will be missing, as we saw in Example \ref{ex:alg_gen_multi-state} but this fact does not affect the algorithm description or its performance.

Let $1\leq j\leq M$ and $I_{n,(k_j,\dots,k_M)}=\sum_{i=j}^{M}I_{(k_i,n),i}$ the $j$-reliability ideal of the system. We sort the generators of $I_{n,(k_j,\dots,k_M)}$ in ascending degree and lexicographically within each degree. For constructing the Mayer-Vietoris tree we will use as pivot always the last generator. First, we use as pivots the generators of $I_{(k_M,n),M}$. We denote each of them by $x_\sigma^M=\prod_{x_i\in\sigma}x_i^M$ for $\sigma\subseteq \{1,\dots, n\}$ and $\vert\sigma\vert=k_M$. For each of these generators we obtain as left child in the Mayer-Vietoris tree the ideal denoted by $I_{\sigma,M}$ given by

$$I_{\sigma,M}=I_{n-k_M,(k_j-k_M,\dots,k_{M-1}-k_M)}+\sum_{x_i\notin \sigma, x_i<\max(\sigma)}\langle x_i^M \rangle,$$

where $I_{n-k_M,(k_j-k_M,\dots,k_{M-1}-k_M)}\subseteq \kb[[n]-\sigma]$. On each of the nodes of the tree we use as pivots the monomials in $\sum_{x_i\notin \sigma, x_i<\max(\sigma)}\langle x_i^M \rangle$ and proceed in the same way when the node is $I_{\sigma,M}=I_{n-k_M,(k_j-k_M,\dots,k_{M-1}-k_M)}$. Finally, after using all the generators of $I_{n,(k_j,\dots,k_M)}$ as pivots, we are left with the ideal $I_{n,(k_j,\dots,k_{M-1})}$. This procedure leads to the following recursive formula for the Betti number of $I_{n,(k_j,\dots,k_M)}$ (we give here the version for i.i.d. components)

\begin{align}\label{ms-k-out-of-n-formula}
\begin{split}
\beta_\alpha(I_{n,(k_j,\dots,k_M)})&=\beta_\alpha(I_{n,(k_j,\dots,k_{M-1})})\\
&+\sum_{i=0}^{n-k_M-2}{{n}\choose{k_M+i}}{{i+k_M-1}\choose{k_M-1}}p_{\geq M}^{k_M+i}\beta_{\alpha-i+1}(I_{n-k_M-i,(k_j-k_M-i,\dots,k_{M-1}-k_M-i)})\\
&+{{n}\choose{k_M+\alpha-1}}{{\alpha+k_M-2}\choose{k_M-1}}p_{\geq M}^{k_M+\alpha-1}\left(\sum_{i=j}^{M-1}{{n-k_M-(\alpha-1)}\choose{k_i-k_M-(\alpha-1)}}p_{\geq i}^{k_i-k_M-(\alpha-1)}\right)\\
&+ p_{\geq M}^{k_M+\alpha} \sum_{i=1}^{n-k_M}(i+1){\i \choose \alpha}.\\
\end{split}
\end{align}

The complete derivation of this formula is straightforward but somewhat tedious. It is based on the analysis of the branches of the Mayer-Vietoris tree, as described in Appendix \ref{appendix:MVT}.
Observe that the computation for $(k_1,\dots,k_M)$ is done in terms of cases with strictly less than $M$ levels, and hence the recursion is on the number of performance levels, and not on the number of variables. The efficiency of this method is equivalent to the one in \cite{ZT06}.

\begin{Remark} There are several algorithms to compute the reliability of generalized multi-state $k$-out-of-$n$ systems. Some of them are restricted to identical independent components. Among these, the algorithm in \cite{HZW00} is as we have seen enumerative (hence of low efficiency) and applicable to monotonic patterns, the one in \cite{ZT06} is also enumerative but more efficient and is applicable to monotonic and non-monotonic patterns. The algorithm in \cite{CBAZ12} is non enumerative and more efficient than the previous ones. For the case of independent but not necessarily identical components the algorithm by \cite{ZC10} uses a finite Markov chain imbedding (FMCI) approach and is adequate for small size systems, as is the algorithm in \cite{TZY08}. Other more efficient algorithms include \cite{CBAZ12}, based on conditional probabilities, or  \cite{MXAB15} using multi-valued decision diagrams. Our algebraic approach is enumerative and applicable to both kind of systems (with independent and identical components and with independent non identical components) and produces not only the full reliability formulas but also bounds.
\end{Remark}

\subsection{Quality of the algebraic bounds}
For a polynomial ring $R=\kb[x_1,\dots,x_n]$ Hilbert's syzygy theorem (cf. \cite{E95} for instance) states that the length of any resolution of an ideal in $R$ is bounded above by $n+1$. In our context this means that the algebraic method using the Betti numbers of reliability ideals produces a compact version of the inclusion-exclusion identity and thus a series of Bonferroni-like bounds for the system's reliability such that if the system $S$ has $n$ components then the reliability formula, given by the Hilbert series numerator of $I_S$, has at most $n+1$ summands. Every truncation of this formula provides a bound for the reliability. We compare these bounds with the following ones considered in \cite{GN17} for some generalized $k$-out-of-$n$ multi-state systems.

If we denote by $\yb^m,\, m=1,\dots, M_p$ the minimal path vectors of  a given multi-state system $S$,with structure function $\phi$ then a lower bound for the reliability of $S$ is given (assuming independent components) by
\[
l'_{\phi}(\pb)=\max_{1\leq m\leq M_p}(\prod_{i=1}^n P(x_i\geq y_i^m))=\max_{1\leq m\leq M_p}(\prod_{i=1}^n p_i^{y_i^m}).
\]
On the other hand, if the minimal cuts of $S$ are given by $\zb^m,\, m=1,\dots,M_c$ then we have the lower bound
\[
l^{**}_{\phi}(\pb)=\prod_{m=1}^{M_c}\coprod_{i=1}^{n}P(x_i\geq z_i^m))=\prod_{m=1}^{M_c}\coprod_{i=1}^{n}p_i^{z_i^m+1}
\]
where for real numbers $p\in[0,1]$ we define $\coprod_{i=1}^{n}p_i=1-\prod_{i=1}^{n}(1-p_i)$.

\begin{Example}
Let $k_1=4,k_2=2,k_3=1$ and let $n=8,11,14$. Let us consider the multi-state generalized $k$-out-of-$n$:G systems $I_{n,(4,2,1)}$ for the following probabilities, independent but not identical:

\begin{small}
\begin{center}
\begin{table}[h]
\begin{tabular}{|r|c|c|c|c|c|c|c|c|c|c|c|c|c|c|}
\hline
level&$c_1$&$c_2$&$c_3$&$c_4$&$c_5$&$c_6$&$c_7$&$c_8$&$c_9$&$c_{10}$&$c_{11}$&$c_{12}$&$c_{13}$&$c_{14}$\\
\hline
1&0.5&0.6&0.4&0.5&0.6&0.4&0.5&0.6&0.4&0.5&0.6&0.4&0.5&0.6\\
2&0.2&0.15&0.1&0.2&0.15&0.1&0.2&0.15&0.1&0.2&0.15&0.1&0.2&0.15\\
3&0.1&0.05&0.05&0.1&0.05&0.05&0.1&0.05&0.05&0.1&0.05&0.05&0.1&0.05\\
\hline
\end{tabular}
 \caption{Probabilities $p_{i,j}$, i.e. $P(c_i\geq j)$ for the components of several generalized multistate $k$-out-of-$n$ systems} 
 \end{table}
\end{center}
\end{small}

The number of generators (i.e. number of minimal paths) of each of the systems considered are given in Table \ref{table:generators} we also give the number of minimal cuts.

\begin{small}
\begin{center}
\begin{table}[h]
\begin{tabular}{|l|c|c|c|}
\hline
Sytem&level&$\#$ minimal paths&$\#$ minimal cuts\\
\hline
$S_{8,(4,2,1)}$&1&106&168\\
$S_{8,(4,2,1)}$&2&36&8\\
$S_{8,(4,2,1)}$&3&8&1\\
$S_{11,(4,2,1)}$&1&396&495\\
$S_{11,(4,2,1)}$&2&66&11\\
$S_{11,(4,2,1)}$&3&11&1\\
$S_{14,(4,2,1)}$&1&1106&1092\\
$S_{14,(4,2,1)}$&2&105&14\\
$S_{14,(4,2,1)}$&3&14&1\\
\hline
\end{tabular}
 \caption{Number of minimal paths and cuts for several generalized multistate $k$-out-of-$n$ systems} 
 \label{table:generators}
 \end{table}
\end{center}
\end{small}

The results are summarized in tables \ref{table:experiments1l} and \ref{table:experiments1u} in which we consider the probability of the system performing at levels $1$ to $3$. In the tables, column $l_i$ indicates a lower bound given by the first $i$ summands of the Hilbert series numerator of the corresponding $j$-reliability ideal, while column $u_i$ denotes an upper bound given by the first $i$ summands. An asterisk indicates that the bound is sharp. Cells with a minus sign $-$ indicate that the bound is meaningless (i.e. upper bounds above $1$ or lower bounds below $0$).

\begin{small}
\begin{table}
\begin{tabular}{|l|c|c|c|c|c|c|c|c|c|c|c|c|c|}
\hline
System&Level&$l_2$&$l_4$&$l_6$&$l_8$&$l_{10}$&$l_{12}$&$l_{14}$\\
\hline
$S_{8,(4,2,1)}$&1& - & - &0.419984&0.779916& & &\\
$S_{8,(4,2,1)}$&2& - &0.480262&0.530988&0.531611& & &\\
$S_{8,(4,2,1)}$&3&0.42&0.435844&0.435914*& & & &\\
\hline
$S_{11,(4,2,1)}$&1& - & - & - & - &0.0.914949&0.937376*&\\
$S_{11,(4,2,1)}$&2& - &0,357057&0.654349&0.666748&0.666865&0.666866*&\\
$S_{11,(4,2,1)}$&3&0.4975&0.541256&0.541819&0.541821*&& &\\
\hline
$S_{14,(4,2,1)}$&1&&& - & - & - &0.870386&0.984878\\
$S_{14,(4,2,1)}$&2&&&0.670885&0.765189&0.767655&0.767675*& \\
$S_{14,(4,2,1)}$&3&&&0.627826&0.627844*&  & & \\
\hline
\end{tabular}
\caption{Lower bounds for several generalized multi-state $k$-out-of-$n$ systems.}
\label{table:experiments1l}
\end{table}

\begin{table}
\begin{tabular}{|l|l|c|c|c|c|c|c|c|c|}
\hline
System&Lvl.&$u_1$&$u_3$&$u_5$&$u_7$&$u_9$&$u_{11}$&$u_{13}$&$u_{15}$\\
\hline
$S_{8,(4,2,1)}$&1& - & - & - &0.825892&0.782246*& & & \\
$S_{8,(4,2,1)}$&2& - &0.750481&0.538913&0.531642&0.531612*& & &\\
$S_{8,(4,2,1)}$&3&0.55&0.43725&0.435916&0.435914* & & & &\\
\hline
$S_{11,(4,2,1)}$&1& - & - & - & - & - &0.938269&0.937376* &\\
$S_{11,(4,2,1)}$&2& - &-&0.741715&0.668326&0.666872&0.666866*& &\\
$S_{11,(4,2,1)}$&3&0.75&0.547875&0.541858&0.541821*&& & &\\
\hline
$S_{14,(4,2,1)}$&1&- & - & -  & -  & - &-&0.992941&0.985126*\\
$S_{14,(4,2,1)}$&1&-&-& - &0.785541&0.767936&0.767677&0.767675*&\\
$S_{14,(4,2,1)}$&1&0.95&0.6455&0.628081&0.627845&0.627844*& & &\\
\hline
\end{tabular}
\caption{Upper bounds for several generalized multi-state $k$-out-of-$n$ systems.}
\label{table:experiments1u}
\end{table}
\end{small}

The results in tables \ref{table:experiments1l} and \ref{table:experiments1u} allow us to discuss the strengths and weaknesses of our method. First of all, for systems with big number of generators, the first bounds are useless due to the fact that each of the first summands of the compact inclusion-exclusion formula consists of a large number of inner summands. As the number of variables increases, we obtain a collection of useful bounds, that compare well with the bounds considered in \cite{GN17} as we can see in Table \ref{table:gnbounds}. Observe that $l^{**}_{\phi}(\pb)$ behaves very well in case we have a multistate parallel system, as is the case in level $3$ of our systems. This is because the minimal cuts are unique in these cases. We have considered low working probabilities in our system, since our bounds are sharper in this case. In case our probabilities are high we can consider the unreliability of the dual systems and thus obtain close bounds. All our bounds were computed in less than one second on a laptop\footnote{CPU: intel i7-4810MQ, 2.80 GHz. RAM: 16Gb}. It is worth noting that the performance of our method does not depend on having identical or non-identical probability distributions in the components of the system.

\begin{table}[h]
\begin{tabular}{|l|l|c|c|}
\hline
System&Lvl.&$l'_{\phi}(\pb)$&$l^{**}_{\phi}(\pb)$\\
\hline
$S_{8,(4,2,1)}$&1&0.108&0.0510583\\
$S_{8,(4,2,1)}$&2&0.1&0.0710738\\
$S_{8,(4,2,1)}$&3&0.1&0.435914*\\
\hline
$S_{11,(4,2,1)}$&1&0.1296&0.35674\\
$S_{11,(4,2,1)}$&2&0.1&0.125414\\
$S_{11,(4,2,1)}$&3&0.1&0.541821*\\
\hline
$S_{11,(4,2,1)}$&1&0.1296&0.762837\\
$S_{11,(4,2,1)}$&2&0.1&0.211015\\
$S_{11,(4,2,1)}$&3&0.1&0.627844*\\
\hline
\end{tabular}
\caption{Lower bounds considered in \cite{GN17} for some generalized multi-state $k$-out-of-$n$ systems}
\label{table:gnbounds}
\end{table}
\end{Example}
\section{Binary $k$-out-of-$n$ system with multi-state components}\label{Sec:B_MS_KN}
The following multi-state generalization of $k$-out-of-$n$ systems was introduced in \cite{S08}. Let $S_{m,n,k}$ be a system with $k$ components, each of which can be in a set of states $\{0,1,\dots, m\}$. $S_{m,n,k}$ is called an $m$-multi-state $k$-out-of-$n$:G system if the system works whenever the sum of the states of the $n$ components is bigger than or equal to $k$. Note that this kind of systems allows $k$ to be bigger than $n$. This is an example of a binary system with multi-state components. This kind of systems are useful to model different situations like the following examples:
\begin{itemize}
\item[-] A storehouse has $n$ storage facilities each of which has a capacity of $m$ units. At any given time each of the facilities is partially full, leaving a real capacity smaller than or equal to $m$ units. The system is said to work if it is capable to store a new arriving lot that consists of $k$ storage units.
\item[-] A set of $n$ pumps and pipes contributes to a global pipe that covers the needs of a power plant. Each individual pipe may supply water at different levels $\{0,\dots,m\}$ and we consider that the system is working if the combined supply (sum of all the individual supplies) is above level $k$.
\end{itemize}
The reliability ideal of $S_{m,n,k}$, denoted by $J^m_{[n,k]}$ is generated by all monomials $x^\mu$ in $n$ variables such that the degree of $x^\mu$ is $k$ and $\mu_j\leq m$ for all $1\leq j\leq n$. To obtain the number of generators of the system (i.e. the minimal working states) and the Betti numbers, needed to compute the reliability function and bounds for it in the algebraic approach, we can proceed as follows.

First, we list all the generators in a precise ordering, following Proposition 3.2.14 in \cite{S08}:  For each $i$ from $m$ descending to $0$ and for each variable $x_j$ for $j$ from $1$ to $n$ (we call $x_j$ the {\em distinguished variable} in each step) we form all monomials $x^\mu$ such that
\begin{itemize}
\item[-] the first $j-1$ variables have an exponent strictly smaller than $i$
\item[-] the variable $x_j$ has an exponent equal to $i$
\item[-] the remaining last $n-j$ variables have an exponent smaller than or equal to $i$
\item[-] the degree of $x^\mu$ equals $k$
\end{itemize}

Using this ordering and Corollary 3.2.25 in \cite{S08} we can obtain the Betti numbers of $J^m_{[n,k]}$ using only one more piece of information, namely, for each generator $x^\mu$ of $J^m_{[n,k]}$ we need to know the number of variables before $x_j$ that have a nonzero exponent in $x^\mu$. So when we list the generators of $J^m_{[n,k]}$ we keep track of how many of the first $j-1$ variables have a nonzero exponent with the notation we just described. The method for this computation of the Betti numbers of a monomial ideal is described in detail in \cite{S08,S09}.

For this, let $j$ be the distinguished variable and $i\leq m$ fixed, the exponent of $x_j$ in $x^\mu$. Now, for each $p$ between $0$ and $k-i$, which represents the sum of the exponents of the first $j-1$ variables of $x^\mu$, and for each $l$ between $0$ and $j-1$, which represents the number of variables among the first $j-1$ ones whose exponent is different from zero, we count all the possible ways to obtain the sum $p$ using $l$ summands each of which is between $1$ and $i-1$. This number is called the {\em number of restricted compositions of $p$ in $l$ summands between $1$ and $i-1$} and is denoted $C(p,l,1,i-1)$ in \cite{JVZ10}. Since we have $l$ nonzero summands among the first $j-1$ variables, we can choose them in ${j-1}\choose {l}$ ways. For each of these choices we have that the exponents of the last $n-j$ variables sum up to $k-i-p$ and each of these exponents is between $0$ and $i$. The number of such compositions is $C(k-i-p, n-j,0,i)$. Hence, putting all these considerations together we have the following result.

\begin{Lemma} \label{lemma:formula}
The number of generators of $J^m_{[n,k]}$ is 
\begin{equation}\label{eq:numgens}
N^m_{[n,k]}=\sum_{i=0}^k \sum_{j=1}^n \sum_{p=0}^{k-i}\sum_{l=0}^{j-1} C(p,l,1,i-1){{j-1}\choose{l}} C(k-i-p,n-j,0,i).
\end{equation}
All these generators have degree $k$, hence $\beta_{0,k}(J^m_{[n,k]})=N^m_{[n,k]}$ and $\beta_{0,j}(J^m_{[n,k]})=0$ for all $j\neq k$. Each generator contributes to $\beta_{i,k+i}(J^m_{[n,k]})$ with ${n-l-1}\choose{i}$ elements, hence the formula for the Betti numbers of $J^m_{[n,k]}$ is
\begin{equation}\label{eq:betti}
\beta_{i,k+i}(J^m_{[n,k]})=\sum_{i=0}^k \sum_{j=1}^n \sum_{p=0}^{k-i}C(p,l,1,i-1){{j-1}\choose{l}} C(k-i-p,n-j,0,i){{n-l-1}\choose{i}}
\end{equation}
and $\beta_{i,j}(J^m_{[n,k]})=0$ if $j\neq k+i$.
\end{Lemma}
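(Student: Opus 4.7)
The plan is to verify the two formulas as a direct combinatorial count, using the enumeration scheme of generators of $J^m_{[n,k]}$ described in the paragraphs immediately preceding the lemma, together with the Betti-number formula from Corollary 3.2.25 of \cite{S08}.

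First I would show that the ordering procedure lists each generator of $J^m_{[n,k]}$ exactly once. Given any monomial $x^\mu$ of degree $k$ with all $\mu_j\le m$, let $i=\max_j \mu_j$ and let $j$ be the \emph{smallest} index with $\mu_j=i$. Then $x^\mu$ is enumerated precisely at step $(i,j)$ of the procedure: the first $j-1$ entries are strictly below $i$ by minimality of $j$, the $j$-th entry equals $i$, and the remaining entries are at most $i$ by maximality. Conversely, monomials produced at step $(i,j)$ satisfy these three constraints, so the assignment $x^\mu\mapsto (i,j)$ is well-defined and injective, which makes the listing a partition of the minimal generating set.

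Second, for fixed $i$ and $j$ I would count the monomials produced at step $(i,j)$ by stratifying according to $l=\#\{1\le r<j : \mu_r>0\}$ and $p=\sum_{r<j}\mu_r$. The $l$ nonzero slots among the first $j-1$ positions can be chosen in $\binom{j-1}{l}$ ways; once chosen, the number of ways to assign positive values in the range $[1,i-1]$ summing to $p$ is exactly $C(p,l,1,i-1)$ by definition of restricted compositions. The last $n-j$ exponents are unconstrained other than lying in $[0,i]$ and summing to $k-i-p$, which gives $C(k-i-p,n-j,0,i)$ choices. Summing the product $C(p,l,1,i-1)\binom{j-1}{l}C(k-i-p,n-j,0,i)$ over all admissible $(i,j,p,l)$ yields \eqref{eq:numgens}; the ranges $i\in[0,k]$, $j\in[1,n]$, $p\in[0,k-i]$, $l\in[0,j-1]$ cover every case (degenerate values produce vanishing summands in the natural convention, e.g.\ $C(0,0,1,i-1)=1$).

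Third, for the Betti numbers I would invoke Corollary 3.2.25 of \cite{S08}, which, applied to the Mayer--Vietoris tree built on the ordering above, says that the contribution of a single generator produced at step $(i,j)$ with parameter $l$ to $\beta_{i,k+i}(J^m_{[n,k]})$ equals $\binom{n-l-1}{i}$ (the bound on which earlier variables may appear in the associated ``pivot'' reduction is recorded precisely by $l$). Since every minimal generator has degree $k$, the ideal is generated in a single degree and the multigraded MVT consists of syzygies concentrated in degrees $k+i$ for homological index $i$, giving the vanishing statement $\beta_{i,j}=0$ for $j\ne k+i$. Multiplying the generator count of the previous paragraph by $\binom{n-l-1}{i}$ and summing delivers \eqref{eq:betti}.

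The main obstacle is the third step: the enumeration in the first two paragraphs is essentially bookkeeping, but the Betti formula requires that the ordering actually realises the hypotheses of Corollary 3.2.25, i.e.\ that the Mayer--Vietoris pivot choices at each node of the tree produce no cancellations and that the ``free'' variables after the pivot step are exactly the $n-l-1$ variables strictly after $x_j$ not already constrained. I would handle this by checking that for each pivot $x^\mu$ the colon ideal and the sum ideal appearing in the MVT split so that the remaining generators correspond to a smaller instance of the same combinatorial problem; this is the content of the technical results in \cite{S08, S09} that underpin Corollary 3.2.25, and once it is in place the Betti contribution $\binom{n-l-1}{i}$ is a standard count of basis elements of the Koszul part of the resolution attached to that branch.
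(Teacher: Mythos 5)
Your proposal is correct and follows essentially the same route as the paper: the paper's argument for this lemma is exactly the derivation in the paragraphs preceding it, namely the enumeration of generators stratified by the maximal exponent $i$, the first (distinguished) position $j$ attaining it, and the parameters $p$ and $l$, followed by the appeal to Corollary 3.2.25 of \cite{S08} for the per-generator contribution $\binom{n-l-1}{i}$ to the Betti numbers. The only thing you add is an explicit verification that the listing hits each minimal generator exactly once (via $i=\max_r\mu_r$ and $j$ the least index with $\mu_j=i$), which the paper leaves implicit.
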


\begin{Remark}
The number of restricted compositions of an integer with a given number of bounded summands can be obtained using a certain generating function, as shown in \cite{A76,E13,FS09}. The following closed formula for some types of restricted compositions can be found in Theorem 2.1 in \cite{JVZ10} which can be used to explicitly compute the numbers in Lemma \ref{lemma:formula} using that $C(k-i-p,n-j,0,i)=C(k-i-p+n-j,n-j,1,i+n+j)$:
\[
C(n,k,1,b)=\sum_{\substack{i_2=\alpha_2,i_3,\dots,i_b \\ \max\{0,\alpha_j\}\leq i_j\leq \min\{\beta_j,\gamma_j\}}} \prod_{l=2}^b {{k-\sum_{j=2}^{l-1} i_j}\choose{i_l}},
\]
where

$$
\alpha_j=n-k(j-1)-\sum_{l=j+1}^b (l-j+1)i_l
$$
$$
\beta_j=k-\sum_{l=j+1}^b i_l \\
$$
$$
\gamma_j=\lfloor \frac{n-k-\sum_{l=j+1}^b (l-1)i_l}{j-1}\rfloor .
$$

\end{Remark}

In order to obtain the necessary information to construct the reliability polynomial and bounds from the Betti numbers of $J^m_{[n,k]}$ we need their multigraded version. For this, let $x^\mu$ a minimal generator of $J^m_{[n,k]}$ and $x_j$ its distinguished variable. Let $(x_{i_1},\dots,x_{i_l})$ be the $l$ variables among the first $j-1$ that appear with a nonzero exponent in $x^\mu$. Let $P_{x^\mu}=\{x_1,\dots, \hat{x}_j,\dots, x_n\}\setminus\{x_{i_1},\dots,x_{i_l}\}$. Then the multidegrees of the contribution of $x^\mu$ to $\beta_{i,k+i}(J^m_{[n,k]})$ are $x^\mu\prod_{x_i\in\sigma}x_i$ for each subset $\sigma$ of $P_{x^\mu}$ of cardinality $i$. Observe that the resolution of $J^m_{[n,k]}$ is $k$-linear, i.e. $\beta_{i,j}J^m_{[n,k]}=0$ for all $j\neq k+i$.

\begin{Example}
Let $S$ be a system with $4$ components, each of which has possible states $\{0,1,2,3\}$ such that the system is working whenever the sum of the states of the components is bigger than or equal $5$. The ideal of this system is $J^3_{[4,5]}\subseteq R={\bf k}[x,y,z,t]$ and is minimally generated by the following $40$ monomials, sorted as described before.
\begin{center}
\begin{tabular}{r|ll}
&$i=3$&$i=2$\cr
\hline
$x$&$x^3yt, x^3zt, x^3yz,x^3y^2,x^3z^2,x^3t^2$ & $x^2y^2z,x^2y^2t,x^2yz^2,x^2yt^2,x^2z^2t,x^2zt^2,x^2yzt$ \cr

$y$&$y^3zt, y^3z^2, y^3t^2, xy^3z, xy ^3t,x^2y^3$&$y^2z^2t,y^2zt^2,xy^2zt,xy^2z^2,xy^2t^2$\cr

$z$&$z^3t^2, xz^3t,yz^3t,xyz^3,x^2z^3,y^2z^3$& $xz^2t^2,yz^2t^2,xyz^2t$\cr

$t$&$xyt^3,xzt^3,yzt^3,x^2t^3,y^2t^3, z^2t^3$&$xyzt^2$\cr

\hline
\end{tabular}
\end{center}

And from this we have that $\beta_{0,5}(J^3_{[4,5]})=40$, $\beta_{1,6}(J^3_{[4,5]})=92$, $\beta_{2,7}(J^3_{[4,5]})=72$, $\beta_{3,8}(J^3_{[4,5]})=19$ and $\beta_{i,j}(J^3_{[4,5]})=0$ otherwise. Observe that, for instance, the multidegrees of the two contributions of $xz^3t$ to $\beta_{1,6}(J^3_{[4,5]})$ are $xyz^3t$ and $xz^3t^2$,  and the multidegree of its contribution to $\beta_{2,7}(J^3_{[4,5]})$ is $xyz^3t^2$  since $P_{xz^3t}=\{y,t\}$.
\end{Example}

We finish with an example of application of these systems.

\subsection{Storage problem using binary $k$-out-of-$n$ systems with multi-state components.}
Binary $k$-out-of-$n$ systems with multi-state components can be used to model storage problems in which the storage capacity is distributed among several containers. To illustrate this, let $S$ be the set of $n$ tanks in a wine cellar where grape is received in the harvesting season. Each of the tanks $T_i,\, i=1,\dots,n$ has a total capacity of $C_i$ tons and when a tractor arrives at the cellar, the staff distributes the the new coming grapes among different tanks so that the wine produced in the tanks is sufficiently homogeneous in terms of the origin of the grapes.

The filling procedure is the following: let $G$ be the number of loads of grapes in the incoming tractor (a load consists of 100Kg). We use a discrete measure of time, namely time $t$ means that we have already stored in the tanks the grapes of $t$ tractors. We denote by $l_t$ a measure of the level of the set of tanks after time $t$. We can consider $l_t$ as the average of the levels of each of the tanks, the minimum or the maximum among them.
We choose a level $l\leq\min\{C_1,\dots,C_n\}$ that we do not want to pass after storing the new coming grapes. Let $m=l-l_t$ and observe that in principle $l$ is chosen so that $m<G$. Among all the possibilities to perform the required load, we choose one randomly. Let us denote by $p^t_{i,j}$ the probability that at time $t$ the empty space in tank $T_i$ is at least $j$. We have that $ p^t_{i,0}=1$ for all $i$ and $p^t_{i,j}\geq 0$ for all $0\leq j\leq m$. If one or more of the tanks is full at time $t$ we continue with the same procedure on the remaining tanks. Our goal is to study the probability $p(l),\, l>l_t$ that we can store the $G$ new coming grape loads in the $n$ tanks so that no tank is filled beyond $l$ and assuming all tanks are already filled to level $l_t$. This situation can be modeled by a binary $G$-out-of-$n$ system with multi-state components, in which each component can be in states $\{0,\dots,m\}$.

\begin{Example} \label{ex:tank1}
Consider a cellar with $n=5$ tanks with a capacity of $15$ tons each. After a certain time $t$ the maximum level on any of the tanks is $12.5$ tons i.e. $125$ loads. A tractor arrives with $15$ loads of grapes and we want to describe how $p(l)$ behaves for $l>125$. We have modeled the probabilities $p_{i,j}$  as $p_{i,j}=1-(\frac{10}{150}j)^{3/2}$ for all $i$, and $0\leq j\leq 15$, and $p_{i,j}=0$ if $j>15$, i.e. in our case all tanks have the same probability distribution. Under these conditions we have a binary $15$-out-of-$5$ system with multi-state components such that each component can be in states $\{0,\dots,m=l-125\}$ for each $l$. Using the results in Section \ref{Sec:B_MS_KN} we have that the ideal of this system is $J^{m}_{[5,15]}$. The number of generators of this ideal, according to the formula given in Lemma \ref{lemma:formula}, gives the number of different ways to allocate the grapes meeting the requirements of the described procedure. Taking into account the probabilities of each of the tanks, we can compute the probability that we can meet the requirements using the multigraded Betti numbers as computed in Lemma \ref{lemma:formula}. We used an implementation of the formulas (\ref{eq:numgens}) and (\ref{eq:betti}) and algorithms to obtain the set of generators and Hilbert series of the corresponding ideals within the computer algebra system \texttt{Macaulay2} \cite{M2}. The results are shown in Figure \ref{fig:tank1} and Table \ref{table:tank1}, in which we also show the time (in seconds) taken for the computation of the full list of multigraded Betti numbers, from which we compute the probability in each case.

\begin{figure}[t]
\begin{center}
\includegraphics[scale=0.65]{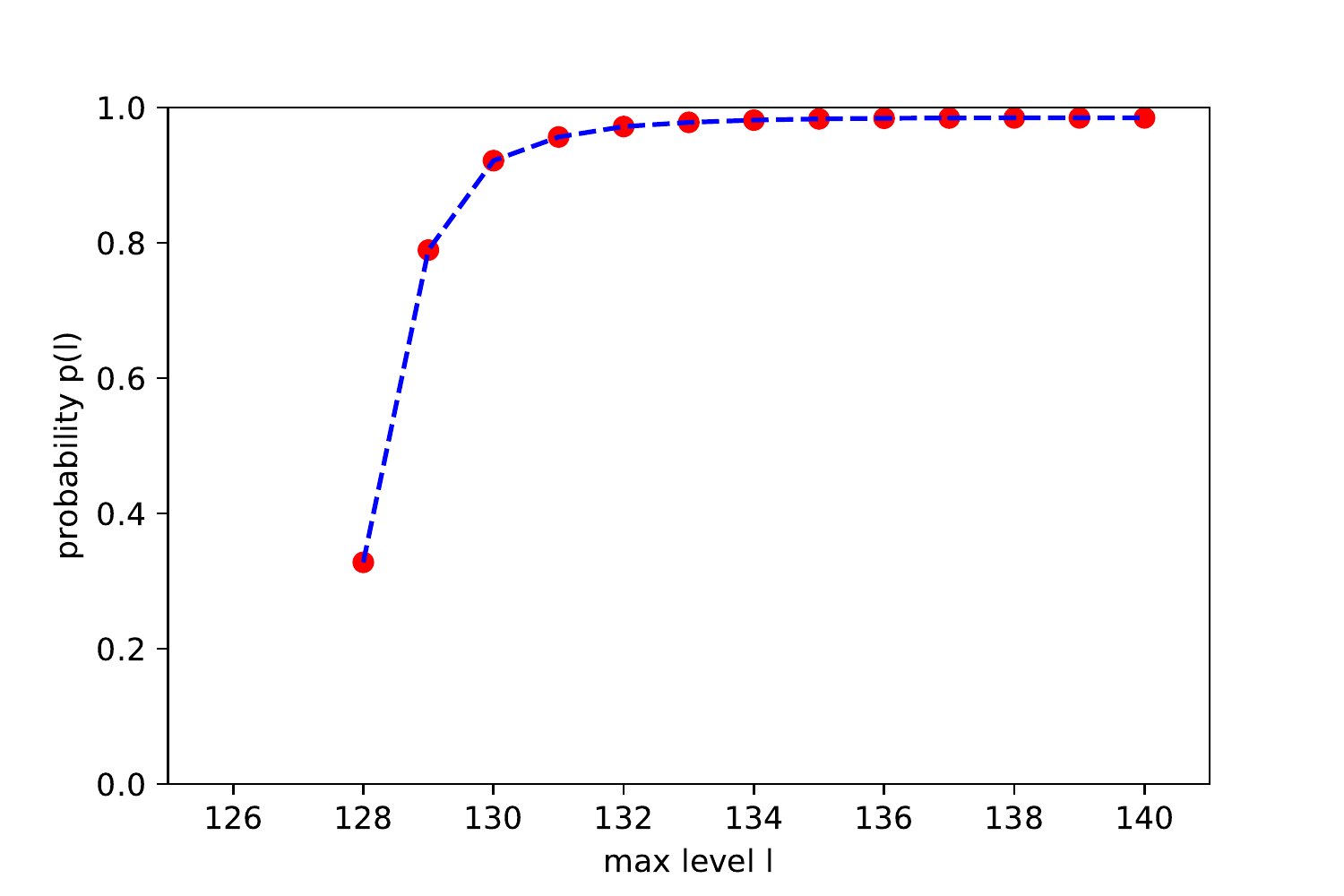}\caption{Probability that we can fill the $5$ tanks in Example \ref{ex:tank1} up to level at most $l$ for $l$ from $125$ to $140$.}\label{fig:tank1}
\end{center}
\end{figure} 

\begin{table}[h]
\begin{tabular}{r|rrr}
Level $l$&$p(l)$&$\#$ gens&time\\
\hline
$125$&$0$&--&--\\
$126$&$0$&--&--\\
$127$&$0$&--&--\\
$128$&$0.32768$&$1$&$0$\\
$129$&$0.78926$&$121$&$0.016$\\
$130$&$0.92148$&$651$&$0.126$\\
$131$&$0.95644$&$1451$&$0.532$\\
$132$&$0.97187$&$2226$&$1.140$\\
$133$&$0.97805$&$2826$&$1.594$\\
$134$&$0.98136$&$3246$&$2.057$\\
$135$&$0.98321$&$3526$&$2.274$\\
$136$&$0.98413$&$3701$&$2.470$\\
$137$&$0.98453$&$3801$&$2.799$\\
$138$&$0.98466$&$3851$&$2.821$\\
$139$&$0.98469$&$3871$&$2.834$\\
$140$&$0.98469$&$3876$&$2.821$\\
\end{tabular}
\medskip
\caption{Probabilities, number of generators and times to compute multigraded Betti numbers for the data in Example \ref{ex:tank1}}
\label{table:tank1}
\end{table}

\end{Example}

\section{Conclusions and further work}
The paper shows how to apply the authors' work on algebraic reliability to  multi-state problems. The key to the extension is to find the right monomial ideal for a suitable generalization of a $k$-out-of-$n$ system. From this the main technical problem is to find the Betti numbers which give tight reliability bounds: generalized extensions of Bonferroni bounds. Multi-unit storage, an increasingly important application, has a natural multi-state description and results are given for some simple examples.

The methods of this paper should be extendable to any multi-state systems in which there is  an identifiable state, or collections of states, which indicates a level of degradation of the system and for which extremal state may lead to the  failure of the system. There are two parts of the theory, one based the algebra and combinatorics of the system and its degradation and the other the stochastic behaviour of the system. 

Future work, therefore, will concentrate on both parts of the theory: algebraic and stochastic and, of course the interplay between the two.  We are aware that stochastic processes are indexed by {\em time} and that therefore the works should give greater priority to the time behaviour bringing in, at least, the standard models of failure. For the algebraic side each "special" state or pattern is likely to lead to different algebra, that is a different ideal or collection of ideals. On the stochastic side we are eager to allow the behaviour systems to be controlled by causal graph (network) based stochastic models, partly because they too are increasingly covered by algebraic theory,  \cite{S18}. Multi-state modeling has become increasingly part of areas such as disease modeling and emergency planning, often under a heading of component and system degradation. Future research will continue to combine Markov and other models of movement between states with the ideal theory describing the detailed structure of failure.

Finally, we should declare that the importance of energy storage, and energy networks,  is likely to lead to more work in that area. We hope also to facilitate the application to genomics, with suitable collaborations.

\section*{Acknowledgments}
The authors are partially funded by grant MTM2017-88804-P of Ministerio de Econom\'ia, Industria y Competitividad (Spain). 

\appendix
\section{A very short introduction to the algebraic method in reliability}\label{appendix:ideals}
In order to illustrate the algebraic method for system reliability analysis, we will use a simple example in which we will use all the concepts involved. A general detailed description and plenty of more elaborate examples can be found in \cite{SW09, SW10, SW11, SW12, SW15, MPSW19} where the interested reader can find full proofs of the relevant results for this approach.
 
 Our simple example is a multi-state parallel system $S$ depicted in Figure \ref{Fig:ParallelIntro} (i.e. it is a $1$-out-of-$2$ multi-state system). Let $\lbrace c_1,c_2\rbrace$ be the components of $S$ and for each component let $\mathcal{S}_1=\lbrace 0,1,2\rbrace$ and $\mathcal{S}_2=\lbrace 0,1,2,3\rbrace$ be the performance levels of $c_1$ and $c_2$ respectively. The structure function of $S$ is given by $\phi(\textbf{s})=\max\lbrace s_1,s_2\rbrace$ for $\textbf{s}=(s_1,s_2)\in\mathcal{S}_1\times\mathcal{S}_2$.
Since we have a two-component system, we can algebraically model its states in a polynomial ring with two variables, $R=\kb[x_1,x_2]$ with $\kb$ a suitable field, we can consider $\kb=\mathbb{R}$. First of all, we observe the correspondence between states of the system $S$ and monomials in $R$.
 
 \begin{figure}
     \centering
     \includegraphics[scale=0.65]{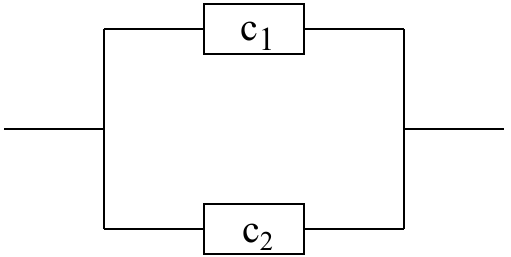}
     \caption{Multi-state parallel system}
     \label{Fig:ParallelIntro}
 \end{figure}
 
 \begin{figure}[h]
\begin{subfigure}[b]{0.48\textwidth}
\centering
    \includegraphics[scale=0.65]{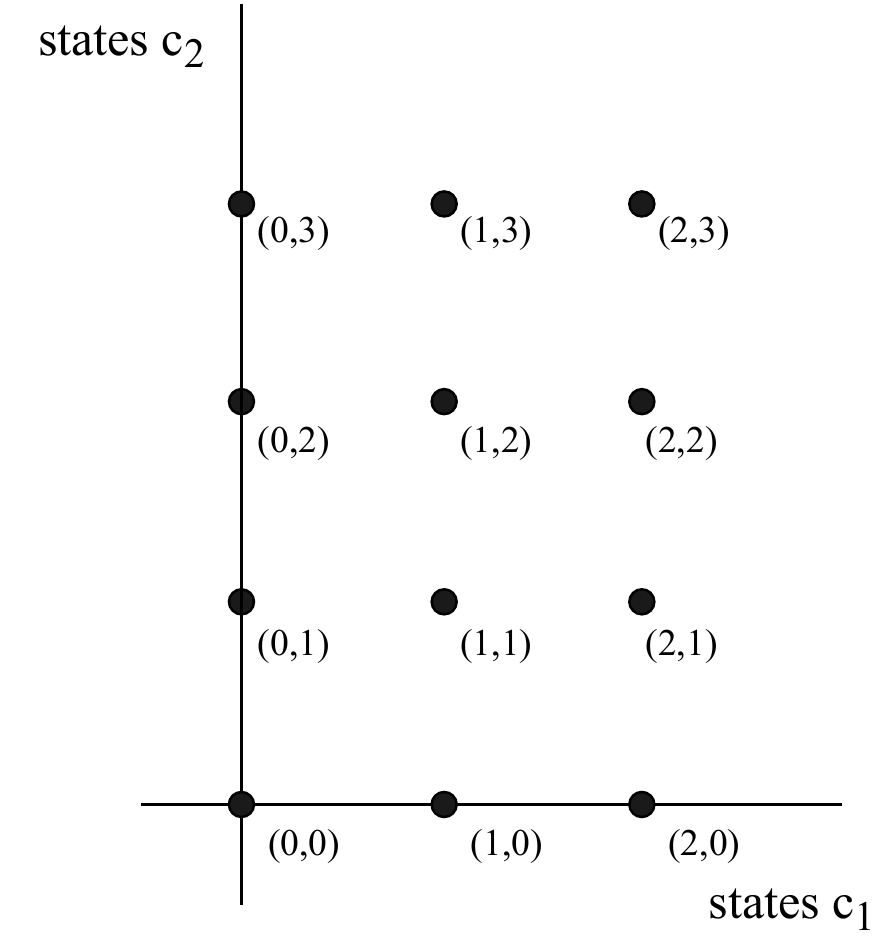}
    \caption{State space of the system $S$}
    \label{Fig:StateSpace}
 \end{subfigure} 
\begin{subfigure}[b]{0.48\textwidth}
\centering
    \includegraphics[scale=0.6]{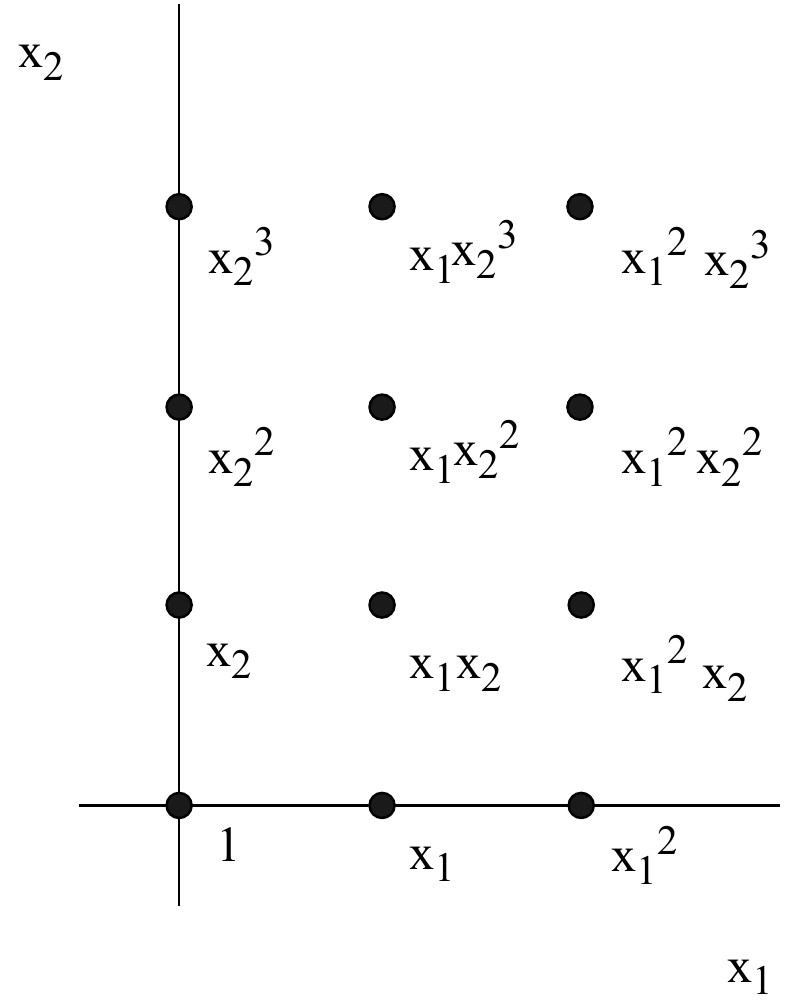}
    \caption{Equivalence between state space and monomials}
    \label{Fig:StateSpaceMonomial}
 \end{subfigure}
 \label{Fig:Relation}
 \caption{Relation between state space of the system and monomials}
 \end{figure}
 
 Figure \ref{Fig:StateSpace} shows the state space of system $S$ i.e. $\lbrace (s_1,s_2)\ :\ s_1\in\mathcal{S}_1\text{ and }s_2\in\mathcal{S}_2\rbrace$. Now, we make each state $(s_1,s_2)\in\mathcal{S}_1\times\mathcal{S}_2$ correspond with the monomial $x_1^{s_1} x_2^{s_2}$ in $R$. These monomials are represented in \ref{Fig:StateSpaceMonomial} so that the correspondence becomes clear.
  
Let us consider now the $j$-working states of $S$ for each $j$, i.e. $\mathcal{F}_{S,j}$ consists of the tuples $\textbf{s}=(s_1,s_2)$ such that $\phi(\textbf{s})\geq j,\, j\in\mathcal{S}$. We have
 \begin{align*}
     \mathcal{F}_{S,1}&=\lbrace (0,1), (0,2), (0,3), (1,0), (1,1), (1,2), (1,3), (2,0), (2,1), (2,2), (2,3)\rbrace,\\
     \mathcal{F}_{S,2}&=\lbrace (0,2), (0,3), (1,2), (1,3), (2,0), (2,1), (2,2), (2,3)\rbrace,\\
     \mathcal{F}_{S,3}&=\lbrace (0,3), (1,3), (2,3)\rbrace.\\
 \end{align*}
 
The minimal $j$-working states, denoted $\overline{\mathcal{F}}_{S,j}$ are the tuples in which if any component decreases its performance level, the performance of all the system decreases to $j'<j$. Then, we obtain
 
 \begin{align*}
     \overline{\mathcal{F}}_{S,1}&=\lbrace (0,1), (1,0)\rbrace,\\
     \overline{\mathcal{F}}_{S,2}&=\lbrace (0,2), (2,0)\rbrace,\\
     \overline{\mathcal{F}}_{S,3}&=\lbrace (0,3)\rbrace.\\
 \end{align*}
 
Having the relation between tuples of components' states and monomials into account and the coherence property of the system, we have that the $j$-working states correspond to the monomials in an ideal of $R$ which we will denote $I_{S,j}$ it is easy to see that the unique minimal monomial generating set of $I_{S,j}$, the $j$-reliability ideal of $S$ is the one corresponding to the minimal $j$-working states of the system. In our example we have that
 \begin{align*}
     I_{S,1}&=\langle x_1,x_2\rangle,\\
     I_{S,2}&=\langle x_1^2,x_2^2\rangle,\\
     I_{S,3}&=\langle x_2^3\rangle.\\
 \end{align*}
 
 That ideals are represented in Figures \ref{Fig:Ideal1}, \ref{Fig:Ideal2} and \ref{Fig:Ideal3} respectively.
 
 \begin{figure}[h]
\begin{subfigure}[b]{0.25\textwidth}
\centering
    \includegraphics[scale=0.5]{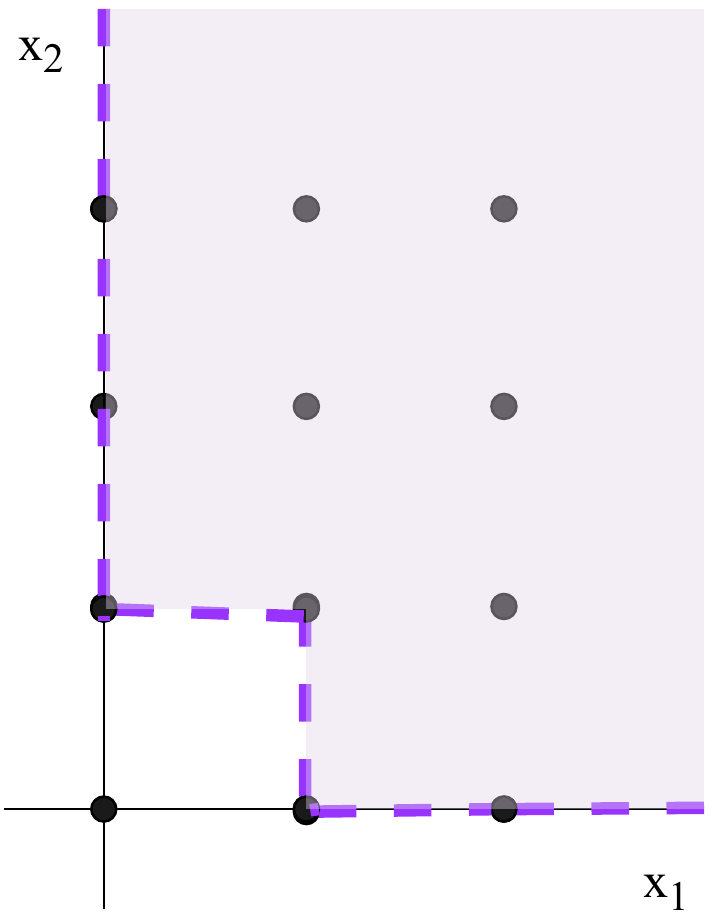}
    \caption{$1$-reliability ideal of system $S$}
    \label{Fig:Ideal1}
 \end{subfigure} 
\begin{subfigure}[b]{0.25\textwidth}
\centering
    \includegraphics[scale=0.5]{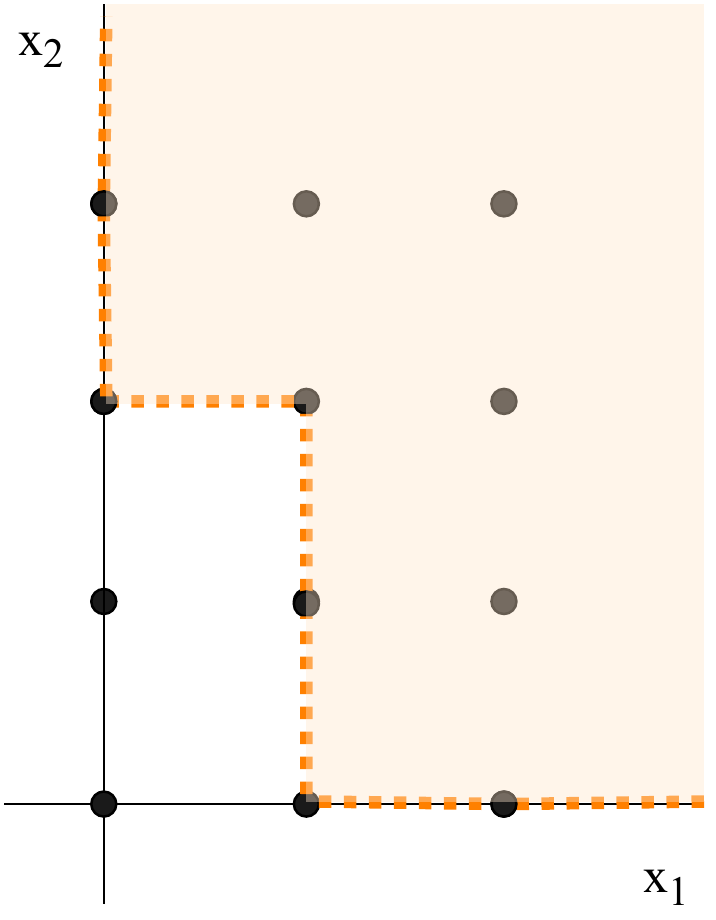}
    \caption{$2$-reliability ideal of system $S$}
    \label{Fig:Ideal2}
 \end{subfigure}
 \begin{subfigure}[b]{0.25\textwidth}
\centering
    \includegraphics[scale=0.5]{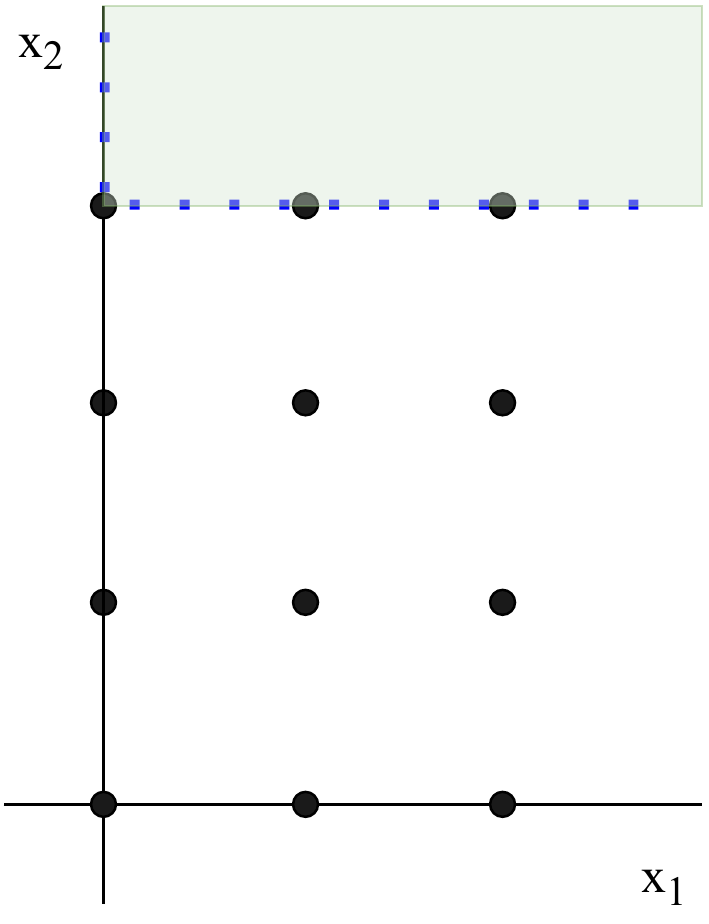}
    \caption{$3$-reliability ideal of system $S$}
    \label{Fig:Ideal3}
 \end{subfigure}
  \caption{$j$-reliability ideals for system $S$}
 \label{Fig:Ideals}
 \end{figure}
 
Observe that while the set of possible states of the system is finite, we have an infinite number of monomials in our ideal. We will deal with this issue when assigning the probability distribution to the system's components and describe its reflection at the ideal level. A powerful tool in commutative algebra to describe the structure of a monomial ideal is the Hilbert series, which is a short way to enumerate the set of monomials in a monomial ideal. It is based on the inclusion-exclusion principle and consists in adding up all the multiples of the minimal generators of the ideal, substract the multiples of the pairwise least common multiple of minimal generators, add again the multiples of the threefold least common multiples of minimal generators, and so on. There are compact ways to obtain the Hilbert series, which are beyond the scope of this paper. For full details we refer the reader to the references at the beginning of this Appendix.

Finally, to use the Hilbert function in order to obtain the $j$-reliability of the system we assign probabilities to monomials. Let's say that $p_{i,j}$ is the probability that component $i$ is in state at least $j$, we then assign to the $j$'th power of variable $i$ the probability $p_{i,j}$ and the probability of a monomial is given by the product of the probabilities assigned to its individual powers. Observe that if a variable is raised to a power that does not correspond to any state of the corresponding component, then its assigned probability is $0$ and this removes all except a finite set of monomials from the final result, except exactly those corresponding to possible states of the system.

As for this example, let us assign $p_{1,1}=0.7,\ p_{1,2}=0.3,\ p_{2,1}=0.7, p_{2,2}=0.2, p_{2,3}=0.1$.
 
 The numerator of the Hilbert series for level $1$ is $H_{I_{S,1}}=x_1+x_2-x_1x_2$. Graphically, this can be seen as:
 \begin{itemize}
     \item The ideal $\langle x_1 \rangle$ contains the monomials in the shaded area in Figure \ref{Fig:Hilbert1}
     \item The ideal $\langle x_1 \rangle$ contains the monomials in the shaded area in Figure \ref{Fig:Hilbert2}
     \item The ideal $\langle x_1x_2 \rangle$ (i.e. generated by the pairwise least common multiples of the generators of the ideal -just one such pair in this case-) contains the monomials in the shaded area in Figure \ref{Fig:Hilbert3}
 \end{itemize}  
 
 Assigning the corresponding probabilities to the monomials in  $H_{I_{S,1}}$ we obtain that the $1$-reliability for $S$ is $0.91$.
 
 Proceeding in the same way we have that the numerator of the Hilbert series for levels $2$ and $3$ are $H_{I_{S,2}}=x_1^2+x_2^2-x_1^2x_2^2$ and $H_{I_{S,3}}=x_2^3$, respectively and the $2$-reliability of $S$ is $0.38$ and the $3$-reliability of $S$ is $0.1$.
  \begin{figure}[h]
\begin{subfigure}[b]{0.29\textwidth}
\centering
    \includegraphics[scale=0.5]{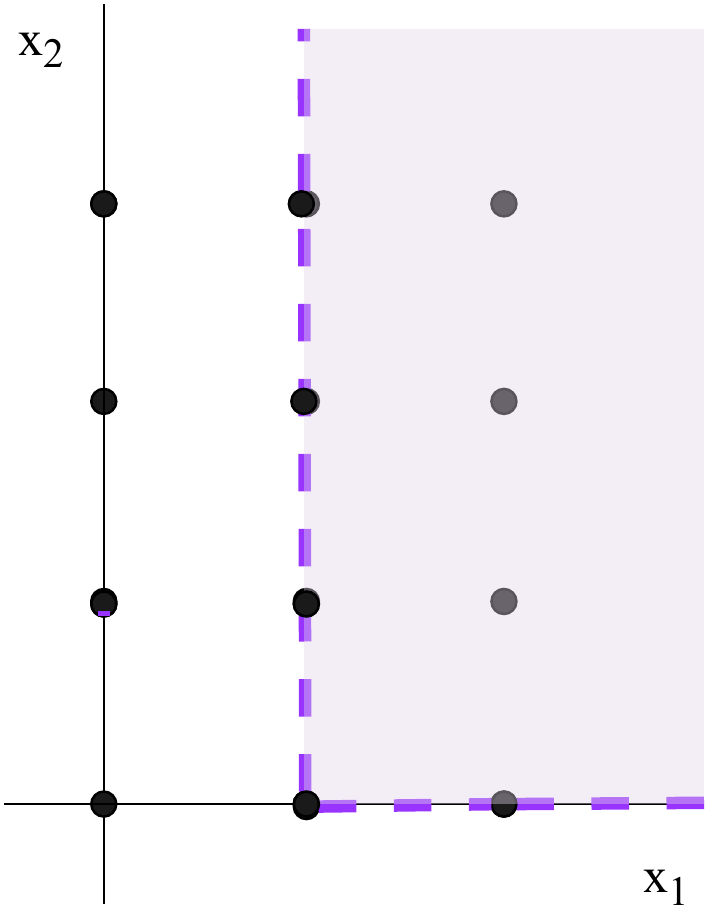}
    \caption{Monomials in $\langle x_1\rangle$}
    \label{Fig:Hilbert1}
 \end{subfigure} 
\begin{subfigure}[b]{0.29\textwidth}
\centering
    \includegraphics[scale=0.5]{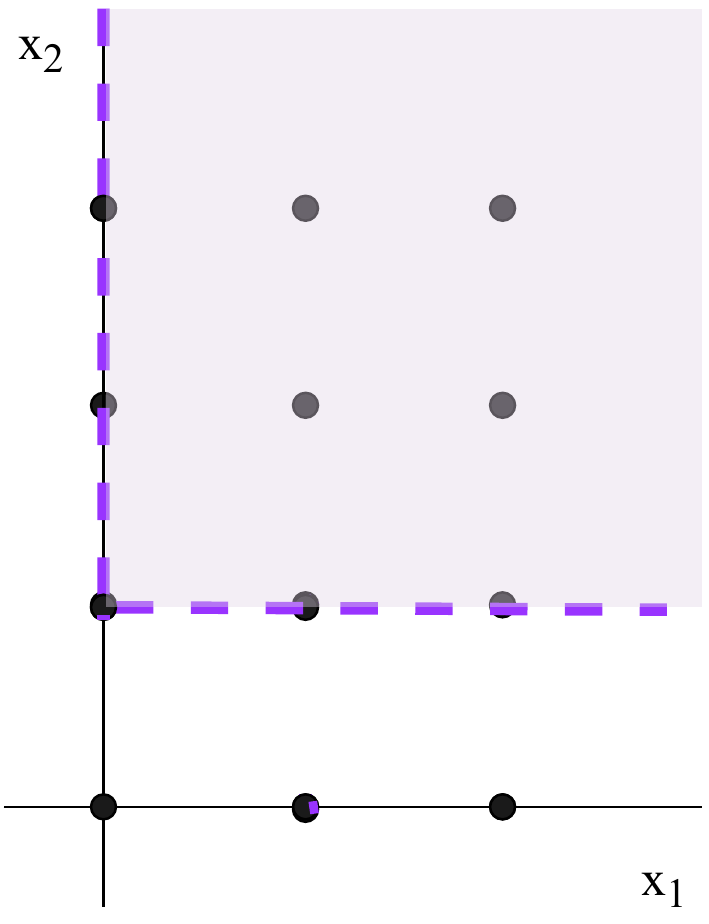}
    \caption{Monomials in $\langle x_2\rangle$}
    \label{Fig:Hilbert2}
 \end{subfigure}
 \begin{subfigure}[b]{0.29\textwidth}
\centering
    \includegraphics[scale=0.5]{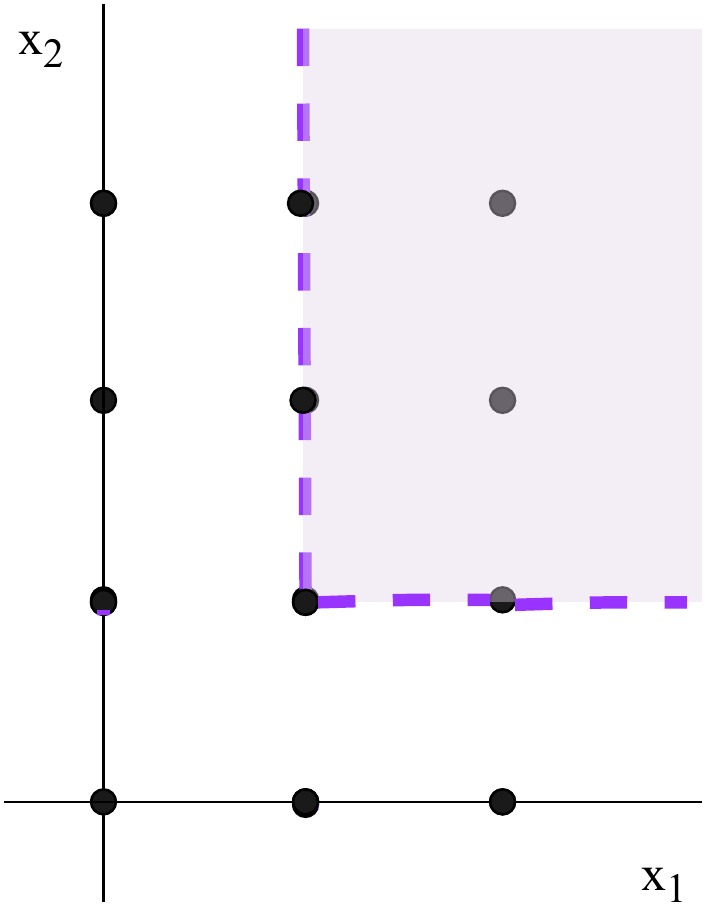}
    \caption{Monomials in $\langle x_1x_2\rangle$}
    \label{Fig:Hilbert3}
 \end{subfigure}
 \label{Fig:Areas}
 \caption{Using $H_{I_{S,1}}$ to obtain the monomials in $\langle x_1, x_2\rangle$}
 \end{figure}

\section{Mayer-Vietoris trees}\label{appendix:MVT}
Let $I\subseteq S=\kb[x_1,\dots,x_n]$ be a monomial ideal and $G=\{g_1,\dots,g_r\}$ a monomial generating set (unless otherwise stated we will always consider that $G$ is the unique minimal monomial generating set of~$I$). Fix any numbering of the elements in $G$ and let $I_i=\langle g_1\dots,g_i\rangle$ be the subideal generated by the first $i$ generators of $I$. For each $i$ we have the following exact sequence
\begin{equation}\label{eq:ses}
0\longrightarrow I_{i-1}\cap \langle g_i\rangle\stackrel{j}{\longrightarrow}I_{i-1}\oplus\langle g_i\rangle\stackrel{l}{\longrightarrow}I_i\longrightarrow 0.
\end{equation}
Assume that free resolutions $\FF'_i$ and $\widetilde{\FF}_i$ are known for $I'_i=I_{i-1}$ and $\widetilde{I_i}=I_{i-1}\cap\langle g_i\rangle$ respectively. Then, a (not necessarily minimal) resolution $\FF_i$ of $I_i$ is obtained as the mapping cone of the chain complex morphism $\psi: \widetilde{\FF_i}\longrightarrow \FF'_i$ that lifts the inclusion $j$, cf. \cite{CE95,HT02}.

Using recursively sequence (\ref{eq:ses}) on $i$ we can compute a free resolution $\FF=\FF_r$ of $I$ that is called an {\em iterated mapping cone resolution}. Observe that this process preserves (multi) degrees. The ideals involved in this process can be displayed as a binary tree. The root of this tree is $I$ and every node $J=\langle f_1\dots,f_j\rangle$ has $J'=\langle f_1,\dots,f_{j-1}\rangle$ as right child and $\widetilde{J}=J'\cap\langle f_j\rangle$ as left child. This is called a {\em Mayer-Vietoris tree} of $I$, cf. \cite{S09}.

Each node in a Mayer-Vietoris tree is assigned a position and a dimension. The root has position $1$ and dimension $0$ and the right and left children of a node with position $p$ and dimension $d$ are given positions $2p+1$ and $2p$ respectively and dimensions $d$ and $d+1$ respectively. We say that a node is {\em relevant} if it is either the root or if its position is even. The multidegrees of the minimal generators of the relevant nodes of dimension $d$ in a Mayer-Vietoris tree are then the multidegrees of the generators of the $d$-th module of the iterated mapping cone resolution $\FF$ of $I$ described by the tree. Let $\MVT(I)_{d,\mu}$ be the set of the positions of the relevant nodes of dimension $d$ of a given Mayer-Vietoris tree of $I$ having $x^\mu$ as a minimal generator. If a monomial $x^\mu$ appears only once as generator of a relevant node in the tree then if $d$ is the dimension of that node and $p$ its position let $\MVT(I)'_{d,\mu}=\{p\}$ otherwise $\MVT(I)'_{d,\mu}=\emptyset$ for all $d$. Note that if $\MVT(I)'_{d,\mu}$ is not empty, then  $\MVT(I)'_{d,\mu}= \MVT(I)_{d,\mu}$. Since the minimal free resolution of $I$ is a subresolution of $\FF$ we have that for any Mayer-Vietoris tree the following result holds \cite{S09}.

\begin{Proposition}\label{prop:MVTbounds}
For any Mayer-Vietoris tree of $I$
\[
\#\MVT(I)'_{d,\mu}\leq\beta_{d,\mu}(I)\leq\#\MVT(I)_{d,\mu}.
\]

\end{Proposition}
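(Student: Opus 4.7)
The plan is to read off both inequalities from the structure of the iterated mapping cone resolution $\FF$ of $I$ associated with the given Mayer--Vietoris tree, and then compare it to the minimal free resolution using the standard cancellation principle.

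First I would establish the bookkeeping: using the mapping cone construction applied recursively to the short exact sequences (\ref{eq:ses}), the $d$-th free module $F_d$ of $\FF$ decomposes as a direct sum of shifts $S(-\mu)$, one for each minimal generator $x^\mu$ of each relevant node of dimension $d$. This is precisely the definition that makes $\#\MVT(I)_{d,\mu}$ equal to the multigraded rank of $F_d$ in multidegree $\mu$. Hence
\[
\dim_{\kb}(F_d)_\mu = \#\MVT(I)_{d,\mu}.
\]

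Next, for the upper bound, I would invoke the fact that the minimal free resolution of $I$ is obtained from any free resolution by splitting off trivial summands (cancellation of units in the differential), so in every multidegree
\[
\beta_{d,\mu}(I) \;\leq\; \dim_{\kb}(F_d)_\mu \;=\; \#\MVT(I)_{d,\mu},
\]
which gives the right-hand inequality immediately.

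For the lower bound, I would argue by the cancellation principle: $\beta_{d,\mu}(I)$ equals $\dim_{\kb}(F_d)_\mu$ minus the number of cancellations that occur in multidegree $\mu$ between $F_d$ and its neighbours $F_{d-1}$ and $F_{d+1}$ when reducing $\FF$ to its minimal subresolution. Every cancellation removes one generator in multidegree $\mu$ from $F_d$ and a matching one from an adjacent module in the same multidegree. Therefore, if $x^\mu$ appears as a minimal generator of a relevant node in $\FF$ in exactly one position in the whole tree (which is the hypothesis defining $\MVT(I)'_{d,\mu}=\{p\}$ to be nonempty), there is no second occurrence of $x^\mu$ anywhere in $\FF$ that could serve as a cancellation partner, and that unique contribution must survive in the minimal resolution. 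This gives $\beta_{d,\mu}(I)\geq 1 = \#\MVT(I)'_{d,\mu}$ in that case, and $\#\MVT(I)'_{d,\mu}=0$ in the other case, so the left-hand inequality holds trivially.

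The main obstacle is the lower bound: one must be careful that ``no cancellation partner'' really means the generator survives. The cleanest way is to use the multigraded cancellation lemma (see e.g. Peeva's treatment), which states that from any multigraded free resolution one obtains the minimal free resolution by a sequence of cancellations of pairs of equal-multidegree generators in consecutive homological degrees. Since a multidegree $\mu$ that appears exactly once across all relevant nodes of $\FF$ has no partner available in $F_{d-1}$ or $F_{d+1}$ in that multidegree, it is untouched by every such cancellation step, proving the lower bound.
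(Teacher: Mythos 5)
Your proposal is correct and follows essentially the same route as the paper, which simply observes that the minimal free resolution is a subresolution of the iterated mapping cone resolution $\FF$ and defers the details to the cited reference: you identify the multidegree-$\mu$ generators of $F_d$ with the relevant nodes counted by $\#\MVT(I)_{d,\mu}$ and then apply the standard consecutive-cancellation lemma, which gives the upper bound by rank comparison and the lower bound because a multidegree occurring exactly once in the whole resolution has no cancellation partner. The only nitpick is notational: the quantity to compare with $\beta_{d,\mu}(I)$ is the number of free summands of $F_d$ generated in multidegree exactly $\mu$ (i.e.\ $\dim_{\kb}(F_d\otimes\kb)_\mu$), not $\dim_{\kb}(F_d)_\mu$, which also counts generators of smaller multidegree.
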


%

The generators of the relevant nodes of $\MVT(I)$ provide upper and lower bounds for the Betti numbers of the ideal without actually computing the resolution. These bounds can be improved using several criteria and are sharp in several families of ideals, see \cite{S09} for details. A simple useful criterion is the following:
\begin{Proposition}
Let $\mu$ be a multidegree such that there are generators of multidegree $\mu$ in relevant nodes of $\MVT(I)$ of dimensions $d_1\dots d_k$ such that no two of them are consecutive, then 
\[
\beta_{d_i,\mu}(I)=\#\MVT(I)_{d_i,\mu}.
\]
\end{Proposition}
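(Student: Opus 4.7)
The plan is to exploit the fact that any multigraded free resolution decomposes, up to isomorphism of complexes, into its minimal resolution plus a direct sum of trivial two-term complexes $0\to S(-\mu)\stackrel{1}{\to}S(-\mu)\to 0$ sitting in two consecutive homological degrees. The Mayer--Vietoris tree produces, via iterated mapping cones on the short exact sequences (\ref{eq:ses}), a multigraded free resolution $\FF$ of $I$ in which each relevant node at dimension $d$ whose minimal generating set contains a monomial of multidegree $\mu$ contributes exactly one free summand $S(-\mu)$ to $\FF_d$. Consequently the number of copies of $S(-\mu)$ appearing in $\FF_d$ is $\#\MVT(I)_{d,\mu}$, which is the content of Proposition~\ref{prop:MVTbounds} refined at the level of multigraded ranks.

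First I would make precise, in the multigraded setting, the decomposition $\FF\iso \mathbb{G}\oplus \Tc$, where $\mathbb{G}$ is the minimal free resolution of $I$ and $\Tc$ is a direct sum of ``trivial'' subcomplexes of the form $0\to S(-\nu)\stackrel{1}{\to}S(-\nu)\to 0$ concentrated in two consecutive homological degrees $d,d+1$. This is the standard cancellation lemma for graded complexes over $\NN^n$-graded polynomial rings: any non-minimal differential entry that is a unit scalar in some multidegree $\nu$ can be used to split off such a trivial summand, and iterating (multidegree by multidegree) produces the decomposition. Hence for every $d$ and every $\mu$,
\begin{equation*}
\#\MVT(I)_{d,\mu}=\beta_{d,\mu}(I)+t_{d,\mu},
\end{equation*}
where $t_{d,\mu}$ is the number of trivial summands of multidegree $\mu$ sitting in homological position $d$.

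Next I would use the shape of the trivial summands: if $t_{d,\mu}>0$, then the same trivial summand contributes also to position $d-1$ or $d+1$ in multidegree $\mu$, so $\#\MVT(I)_{d-1,\mu}+\#\MVT(I)_{d+1,\mu}>0$. Contrapositively, if $\#\MVT(I)_{d-1,\mu}=\#\MVT(I)_{d+1,\mu}=0$, then $t_{d,\mu}=0$ and therefore $\beta_{d,\mu}(I)=\#\MVT(I)_{d,\mu}$. By hypothesis the dimensions at which multidegree $\mu$ occurs in relevant nodes are $d_1,\dots,d_k$ and no two are consecutive, so for each $d_i$ neither $d_i-1$ nor $d_i+1$ belongs to this list; hence $\#\MVT(I)_{d_i-1,\mu}=\#\MVT(I)_{d_i+1,\mu}=0$ and the conclusion follows.

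The main obstacle is the bookkeeping in the cancellation argument: one must make sure that when splitting off trivial summands of multidegree $\mu$ we never accidentally change the count of summands of multidegree $\mu$ at a position whose neighbours are empty in that multidegree. This is guaranteed because each trivial subcomplex is supported in exactly two consecutive homological positions carrying the same multidegree, so the splitting process preserves $\#\MVT(I)_{d_i,\mu}$ whenever the neighbouring positions $d_i\pm 1$ do not carry multidegree $\mu$ to begin with. Everything else in the argument is formal and follows from standard properties of multigraded minimal free resolutions as recalled, e.g., in the references in \cite{S09}.
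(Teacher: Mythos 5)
Your argument is correct and is essentially the paper's own: the proposition is stated there without a written proof, but the discussion of reduction pairs that immediately follows it (with details deferred to \cite{S09}) is exactly your cancellation argument --- non-minimality of the mapping-cone resolution can only be removed by splitting off trivial summands joining generators of equal multidegree in \emph{consecutive} homological positions, which is impossible in multidegree $\mu$ when no two of the $d_i$ are consecutive. Your closing worry about bookkeeping is moot, since $\#\MVT(I)_{d,\mu}$ is fixed by the tree and the multigraded ranks of the free modules are invariants of the isomorphism class of the complex, so the splitting cannot alter the counts.
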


We say that two generators $e^{(i)}_\sigma$ and $e^{(i-1)}_\tau$ of $\FF$ with the same multidegree form a {\em reduction pair} if the coefficient of $e^{(i-1)}_\tau$ in $\varphi(e^{(i)}_\sigma)$ is a non-zero scalar, i.e. if we can reduce $\FF$ by deleting $e^{(i)}_\sigma$ and $e^{(i-1)}_\tau$ and adjusting $\varphi_i$. Reduction pairs appear only in {\em compatible} nodes. Let $J$ and $J'$ two nodes of $\MVT(I)$ whose first common ancestor is $K$ and such that $J$ is a descendant of $\widetilde{K}$ and $J'$ is a descendant of $K'$ we say $J$ and $J'$ are compatible if $\dim(J)-\dim(\widetilde{K})=\dim(J')-\dim(K')$. Compatibility of $J$ and $J'$ can be read from the binary expression of their positions. We can therefore ensure that $\beta_{d,\mu}(I)$ is bigger than or equal to the number of generators of multidegree $\mu$ in relevant nodes of dimension $d$ in $\MVT(I)$ such that they have no compatible generator. Hence, if there are no compatible generators, we obtain the Betti numbers of~$I$ directly from $\MVT(I)$.

\begin{Example}
Let us consider Mayer-Vietoris trees of ideals of consecutive linear $k$-out-of-$n$:G systems. Theses systems work if at least $k$ consecutive components of the $n$ components of the system work. The corresponding ideal is of the form $I_{k,n}=\langle x_1\cdots x_k,\dots, x_{n-k+1}\cdots x_n\rangle$. The Mayer-Vietoris tree of the ideal of the consecutive linear $2$-out-of-$5$ system, taking as pivot always the last generator, is

\begin{center}
 \begin{tikzpicture}[scale=1]
 \tikzstyle{level 1}=[sibling distance=6.5cm]
 \tikzstyle{level 2}=[sibling distance=3.5cm]
 \node{$(1,0)$ $x_1x_2,x_2x_3,x_3x_4,x_4x_5$}
 child{ node{$(2,1)$ $x_1x_2x_4x_5,x_3x_4x_5$}
 	child{node{$(4,2)$ $x_1x_2x_3x_4x_5$}}
 	child{node[color=black!50!white]{$(5,1)$ $x_1x_2x_4x_5$}}}
 child{ node[color=black!50!white]{$(3,0)$ $x_1x_2,x_2x_3,x_3x_4$}
 	child{node{$(6,1)$ $x_2x_3x_4$}}
 	child{node[color=black!50!white]{$(7,0)$ $x_1x_2,x_2x_3$}
	child{node{$(14,1)$ $x_1x_2x_3$}}
	child{node[color=black!50!white]{$(15,0)$ $x_1x_2$}
	   }}};
 \end{tikzpicture}
 \end{center}

From this tree we obtain that $\beta_{0,2}(I_{2,5})=4$, $\beta_{1,3}(I_{2,5})=3$, $\beta_{1,4}(I_{2,5})=1$ and $\beta_{2,5}(I_{2,5})=1$. Moreover, the numerator of the Hilbert series of this ideal is 
\[
HN_{I_{2,5}}=(x_1x_2+x_2x_3+x_3x_4+x_4x_5)-(x_1x_2x_4x_5+x_3x_4x_5+x_2x_3x_4+x_1x_2x_3)+x_1x_2x_3x_4x_5
\]
As one can see, the node at position $3$ of $MVT(I_{k,n})$ is just $I_{k,n-1}$ so the contribution of this branch of the tree is just a smaller case of the same kind. The analysis of the other branch of the tree is also straightforward and we can easily come up with a recursive formula for the Betti numbers of $I_{k,n}$ as was shown in \cite{SW09}. Using this kind of reasoning on Mayer-Vietoris trees we come out with recursive formulas like (\ref{ms-k-out-of-n-formula}).
\end{Example}



\begin{thebibliography}{99}

\bibitem{A76} Abramson M. {\em Restricted combinations and compositions}, Fibonacci Quarterly 14:5 (1976), pp. 439--452

\bibitem{HAR07}
Al-Seedy R., Habib A. and Radwan, T., \textit{Reliability evaluation of multi-state consecutive k-out-of-r-from-n: G system.} Applied Mathematical Modelling, Vol 31 (2007), pp. 2412--2423.

\bibitem{AEHR11}
Al-Seedy R., Elsherbeny A., Habib, A. and Radwan, T., \textit{Bounds for increasing multi-state consecutive k-out-of-r-from-n: F system with equal components probabilities.} Applied Mathematical Modelling, Vol 35 (2011), pp. 2366-2373.

\bibitem{AZD09}
Amari S. V., Zuo M. J., Dill G. \textit{A fast and robust reliability evaluation algorithm for generalized multi-state $k$-out-of-$n$ systems.} IEEE Transactions on Reliability, Vol 58, No 1 (2009), pp. 88--97.

\bibitem{ADMX15}
Amari, S. V., Dugan, J. B., Xing, L. and Mo, Y. \textit{Efficient analysis of multi-state k-out-of-n systems.} Reliability Engineering \& System Safety, Vol 133, (2015), pp. 95-105.

\bibitem{BK94}
Boedigheimer R. A., Kapur K. C. \textit{Customer-driven reliability models for multi-state coherent systems.} IEEE Transactions on reliability, Vol 43, No 1 (1994), pp. 96--50.

\bibitem{CBAZ12}
Chaturvedi S. K., Besha S. H., Amari S. V., Zuo M. J. \textit{Reliability analysis of generalized multi-state $k$-out-of-$n$ systems.} J Risk Reliability, Vol 226, No 3 (2012), pp.327--336.

\bibitem{CE95}
Charalambous H.  and Evans E.G., \textit{Resolutions obtained as iterated mapping cones}, Journal of Algebra 176 (1995), pp. 750--754

\bibitem{DZLL10}
Ding, Y., Lisnianski, A., Li, W. and Zuo, M. J. \textit{A framework for reliability approximation of multi-state weighted k-out-of-n systems.} IEEE Transactions on Reliability, Vol 59, No. 2 (2010), pp. 297-308.

\bibitem{DZTL10}
Ding, Y., Li, W., Tian, Z. and Zuo, M. J. \textit{The hierarchichal weighted multi-state k-out-of-n system model and its application for infrastructure management.} IEEE Transactions on Reliability, Vol 59, No. 3, (2010), pp. 593-603.

\bibitem{DZNHMKS15}
Dolan, D., Zupanic, A., Nelson, G., Hall, P., Miwa, S., Kirkwood, T.  and Shanley, D. P. \textit{Integrated stochastic model of DNA damage repair by non-homologous end joining and p53/p21-mediated early senescence signalling},  PLoS computational biology, Vol. 11,  No. 5 (2015), e1004246

\bibitem{E10} 
Eger, S. {\em Review of recent advances in reliability of consecutive $k$-out-of-$n$ and related systems}, Journal of Risk and Reliability Vol 224, No. 3 (2010), pp. 225-237. 

\bibitem{E13} 
Eger, S. {\em Restricted weighted integer compositions and extended binomial coefficients}, Journal of Integer Sequences 16 (2013), article 13.1.3

\bibitem{E18} 
Eger, S. {\em Reliability analysis of multi-state system with three-state components and its application to wind energy}, Reliability Engineering and System Safety Vol 172 (2018), pp. 58-63.

\bibitem{E95} Eisenbud, D. \textit{Commutative algebra with a view towards algebraic geometry}, Springer, 1995

\bibitem{EPS78}
El-Neweihi E., Proschan F., Sethuraman J. \textit{Multi-state coherent system.} J Applied Probability, Vol 15 No 4 (1978), pp. 675--688


\bibitem{FB14}
Fenton, N. and Bieman, J. \textit{Software Metrics: A Rigorous and Practical Approach, Third Edition}, CRC Press, 2014

\bibitem{FS09} 
Flajolet, P. and Sedgewick, R. {\em Analytic combinatorics}, Cambridge University Press,
Cambridge, 2009

\bibitem{GN17} 
Gasemir, J. and Natvig, B., \textit{Improved availability bounds for binary and multi-state systems with independent component processes}, Journal of Applied Probability 54(3), (2017), pp. 750--762.

\bibitem{HT02}
Herzog J., and Takayama Y., \textit{Resolutions by mapping cones}, Homology, Homotopy and Applications, 4 (2002), pp. 277--294.

\bibitem{HZF03}
Huang, J. and Zuo, M. J. and Fang, Z., {\em Multi-state consecutive k-out-of-n systems.} IIE Transactions, Vol 35 (2003), pp. 527--534

\bibitem{HZW00}
Huang J., Zuo M. J., Wu Y. \textit{Reliability evaluation of combined $k$-out-of-$n$:f, consecutive-$k$-out-of-$n$:f and linear connected-$(r,s)$-out-of-$(m,n)$:f system structures.} IEEE Transactions on reliability, Vol 49, No 1 (2000), pp. 99--104.

\bibitem{HZW00b}
Huang J., Zuo M. J., Wu Y. \textit{Generalized Multi-state $k$-out-of-$n$:G systems.} IEEE Transactions on reliability, Vol 49, No 1 (2000), pp. 105--111.

\bibitem{JVZ10} 
G. Jaklic, V. Vitrih and E. Zagar, {\em Closed form formula for the number of restricted compositions}, Bull. Aus. Math. Soc. 81 (2010), pp. 289--297

\bibitem{KS17}
Kumar, A. and Singh, SB. \textit{Computations of the signature reliability of the coherent system}, Vol. 34, No. 6 (2017), pp. 785-797

\bibitem{KZ03}
Kuo W., Zuo M. J. \textit{Optimal reliability modeling: principles and applications.} New York: John Wiley \& Sons; 2003.

\bibitem{LD09}
Lisnianski, A. and Ding, Y. \textit{Redundancy analysis for repairable multi-state system by using combined stochastic processes methods and universal generating function technique}, Reliability Engineering \& System Safety, Vol. 94, No. 11 (2009), pp. 1788--1795

\bibitem{LFK17}
Lisnianski, A., Frenkel, I. and Karagrigoriou, A., \textit{Recent advances in multi-state systems reliability: Theory and applications}, Springer; 2017.

\bibitem{LL03}
Lisnianski, A. and Levitin, G. \textit{Multi-state system reliability: Assesment, Optimization and Applications.} World Scientific Publishing; 2003.

\bibitem{LPLLCS19}
Liu, Y., Pedrielli, G., Li, H., Lee, L. H., Chen, C-H. and Shortle, J. F. \textit{Optimal Computing Budget Allocation for Stochastic N--$ k $ Problem in the Power Grid System. } IEEE Transactions on Reliability, Vol. 68, No.3 (2019) pp. 778--789.

\bibitem{M2}
Grayson, D. R., and Stillman, M. E., \textit{Macaulay2, a software system for research in algebraic geometry}, Available at \texttt{http://www.math.uiuc.edu/Macaulay2/}

\bibitem{MXAB15}
Mo Y., Liudong X., Amari A. V., Bechta J. \textit{Efficient analysis of multi-state $k$-out-of-$n$ systems.} Reliability Engineering and System Safety, Vol 133 (2015), pp. 95--105.

\bibitem{M19}
Mohammadi, L., \textit{The joint reliability signature of order statistics}, Communications in Statistics-Theory and Methods (2019), pp.1--22

\bibitem{MPSW19}
Mohammadi, F., Pascual-Ortigosa, P., S\'aenz-de-Cabez\'on, E. and Wynn, H.P. \textit{Polarization and depolarization of monomial ideals with application to multi-state system reliability}, Journal of Algebraic Combinatorics (2019), DOI: 10.1007/s10801-019-00887-6

\bibitem{MXCS17}
Cui, L., Mo, Y., Si, S. and Xing, L. \textit{MDD-based performability analysis of multi-state linear consecutive-k-out-of-n: F systems.} Reliability Engineering \& System Safety, Vol 166 (2017), pp. 124-131.

\bibitem{N11}
Natvig B. \textit{multi-state systems reliability theory with applications.} John Wiley \& Sons; 2011.

\bibitem{RT19}
Ram, M. and Dohi, T. \textit{Systems Engineering: Reliability Analysis Using K-out-of-n Structures.} CRC Press; 2019

\bibitem{RLDC13}
Rizk, G., Lavenier, D.and Chikhi, R. \textit{DSK: k-mer counting with very low memory usage}, Bioinformatics, Vol. 29, No. 5 (2013), pp. 652--653

\bibitem{R19}
Rushdi, A. M. A., \textit{Utilization of symmetric switching functions in the symbolic reliability analysis of multi-state k-out-of-n systems}, International Journal of Mathematical, Engineering and Management Sciences (IJMEMS), Vol4, No2 (2019), pp. 306--326. 

\bibitem{S08} 
S\'aenz-de-Cabez\'on, E., {\em Combinatorial Koszul Homology: Computations and Applications}, PhD Thesis, Universidad de La Rioja, 2008.

\bibitem{S09} 
S\'aenz-de-Cabez\'on, E., {\em Multigraded Betti numbers without computing minimal free resolutions}, Applicable Algebra Eng. Commun. Comput., vol. 20 (2009), pp. 481--495.

\bibitem{SW09} 
S\' aenz-de-Cabez\' on, E. and Wynn, H. P., {\em Betti numbers and minimal free resolutions for multi-state system reliability bounds}, Journal of Symbolic Computation 44 (2009), pp. 1311--1325.

\bibitem{SW10} 
S\' aenz-de-Cabez\' on, E. and Wynn, H. P., {\em Mincut ideals of two-terminal networks}, Applicable Algebra Eng. Commun. Comput., vol. 21 (2010), pp. 443--457.

\bibitem{SW11}
S\' aenz-de-Cabez\' on, E. and Wynn, H. P., {\em Computational algebraic algorithms for the reliability of generalized $k$-out-of-$n$ and related systems}, Math. Comput. Simulation, vol. 82, no. 1 (2011), pp. 68--78.

\bibitem{SW12} 
S\' aenz-de-Cabez\' on, E. and Wynn, H. P., \textit {Algebraic reliability based on monomial ideals: A review}, in Harmony of Gr\"obner Basis and The Modern Industrial Society, Wiley and sons (2012), pp. 314--335.

\bibitem{SW15} 
S\' aenz-de-Cabez\' on, E. and Wynn, H. P., {\em Hilbert functions for design in reliability}, IEEE Transactions on Reliability, vol. 64, no. 1 (2015), pp. 83--93.

\bibitem{SJM19}
Singh, Ch., Jirutitijaroen, P. and Mitra, J. \textit{Introduction to Power System Reliability. }Wiley-IEEE Press, 2019

\bibitem{STV95} Sturmfels, B., Trung, N. V. and Vogel, W., {\em Bounds on degrees of projective schemes}, Math. Ann., 302(3) (1995), pp. 417--432

\bibitem{S18}
Sullivant, S. \textit{Algebraic Statistics}, American Mathematical Soc., 2018

\bibitem{TZY08} Tian Z., Zuo M.J. and Yam R., \textit{Multi-state k-out-of-n systems and their performance evaluation}, IIE Transactions 41 (2008), pp. 32--44

\bibitem{Y06}
Yeh, W-Ch. \textit{The k-out-of-n acyclic multistate-node networks reliability evaluation using the universal generating function method}, Reliability Engineering \& System Safety, Vol. 91, No. 7 (2006), pp. 800--808

\bibitem{YJ12} 
Yingkui, G. and Jing, L. \textit{Multi-State System Reliability: A New and Systematic Review}, Procedia Engineering 29 (2012), pp. 531--536.

\bibitem{ZC10}
Zhao X., Cui L. R. \textit{Reliability evaluation of generalized multi-state $k$-out-of-$n$ systems based on FMCI approach.} Int J Syst Sci , Vol. 41(2010), pp. 1437--1443.

\bibitem{ZT06}
Zuo M J, Tian Z. \textit{Performance evaluation of generalized multi-state $k$-out-of-$n$ systems.} IEEE Transactions on Reliability, Vol 55, No 2 (2006), pp. 319--327.

\end{thebibliography}
\end{document}